\pdfoutput=1
\documentclass[letterpaper,12pt,oneside,final]{article}
\setlength{\marginparwidth}{0pt}
\setlength{\marginparsep}{0pt}
\setlength{\evensidemargin}{0.125in}
\setlength{\oddsidemargin}{0.125in}
\setlength{\textwidth}{6.375in}
\raggedbottom
\setlength{\parskip}{\medskipamount}

\usepackage{amsmath}
\usepackage{amsthm}
\usepackage{amssymb}
\usepackage{enumitem}
\usepackage{graphicx}
\usepackage{float}
\usepackage{hyperref}
\usepackage{cleveref}
\usepackage{tikz,ifthen}
\usetikzlibrary{decorations.pathreplacing}
\usepackage{url}
\hypersetup{
    colorlinks,
    linkcolor={blue},
    citecolor={blue},
    urlcolor={blue}
}
\usepackage{subcaption}
\bibliographystyle{abbrv}

\newtheorem{theorem}{Theorem}[section]

\newtheorem{lemma}[theorem]{Lemma}
\newtheorem{corollary}[theorem]{Corollary}
\theoremstyle{definition}
\newtheorem{definition}[theorem]{Definition}
\newtheorem{example}[theorem]{Example}
\newtheorem{remark}[theorem]{Remark}
\newtheorem{note}[theorem]{Note}

\title{Vertex models for the product of a permuted-basement Demazure atom and a Schur polynomial}
\author{Timothy C. Miller \\
    \small Department of Combinatorics and Optimization \\[-0.8ex]
    \small University of Waterloo \\ [-0.8ex]
    \small Waterloo, ON N2L 3G1 \\ [-0.8ex]
    \small Canada \\
    \small\tt \href{mailto:tcmiller@uwaterloo.ca}{tcmiller@uwaterloo.ca} \\
}


\newcommand{\al}{\alpha}
\newcommand{\be}{\beta}
\newcommand{\de}{\delta}

\newcommand{\la}{\lambda}
\newcommand{\si}{\sigma}

\newcommand{\Z}{\mathbb{Z}}

\newcommand{\A}{\mathcal{A}}

\newcommand{\K}{\mathcal{K}}

\newcommand{\SSAF}{{\operatorname{SSAF}}}

\newcommand{\id}{{\operatorname{id}}}

\newcommand{\len}{\operatorname{len}}
\newcommand{\BB}[1]{${\color{blue6}\mathbf{#1}}$}
\newcommand{\itc}[1]{\textbf{\textit{\color{blue6}#1}}}

\def\thickwidth{1mm}
\def\colourwidth{0.6mm}
\def\circwidth{0.3mm}
\def\circsize{1.1mm}
\definecolor{blue1}{rgb}{0,1.0,1.0}
\definecolor{blue2}{rgb}{0,0.8,0.9}
\definecolor{blue3}{rgb}{0,0.6,0.8}
\definecolor{blue4}{rgb}{0,0.4,0.7}
\definecolor{blue5}{rgb}{0,0.2,0.6}
\definecolor{blue6}{rgb}{0,0.0,0.5}
\definecolor{red}{rgb}{1,0,0}
\definecolor{purple1}{rgb}{0.9,0,0.9}
\definecolor{purple2}{rgb}{0.7,0,0.7}
\definecolor{purple3}{rgb}{0.5,0,0.5}
\definecolor{gray}{rgb}{0.4,0.4,0.4}
\definecolor{light gray}{rgb}{0.9,0.9,0.9}
\tikzset{thick/.style={line width=\thickwidth}}
\tikzset{thick dashed/.style={line width=\thickwidth,dashed}}
\tikzset{blue1 line/.style={line width=\colourwidth,blue1}}
\tikzset{blue2 line/.style={line width=\colourwidth,blue2}}
\tikzset{blue3 line/.style={line width=\colourwidth,blue3}}
\tikzset{blue4 line/.style={line width=\colourwidth,blue4}}
\tikzset{blue5 line/.style={line width=\colourwidth,blue5}}
\tikzset{blue6 line/.style={line width=\colourwidth,blue6}}
\tikzset{red line/.style={line width=\colourwidth,red}}
\tikzset{purple1 line/.style={line width=\colourwidth,purple1}}
\tikzset{purple2 line/.style={line width=\colourwidth,purple2}}
\tikzset{purple3 line/.style={line width=\colourwidth,purple3}}
\tikzset{red block/.style={line width=\colourwidth*2,color=red,fill=red,opacity=0.20}}
\tikzset{purple outline/.style={line width=\colourwidth*2,color=purple1,fill=purple1,opacity=0.20}}
\tikzset{dotted red line/.style={line width=\colourwidth,red,dotted}}
\tikzset{blue weight/.style={blue1, opacity=0.15}}
\tikzset{red weight/.style={red, opacity=0.15}}
\tikzset{gray fill/.style={fill=light gray, draw=gray}}
\def\ascale{4mm}
\def\bscale{3mm}
\def\cscale{2.5mm}
\def\dscale{2mm}
\def\v{{sqrt(3)}}
\def\vx{sqrt(3)}
\def\smfont{\footnotesize}



\newcommand{\vver}{--++(1,+\v)}
\newcommand{\ver}{--++(1/2,+\v/2)}
\newcommand{\hor}{--++(-1,0)}
\newcommand{\hhor}{--++(-2,0)}

\newcommand{\rhhor}{--++(2,0)}
\newcommand{\dia}{--++(-1/2,+\v/2)}
\newcommand{\ddia}{--++(-1,+\v)}
\newcommand{\lel}{--++(1/2,+\v/2)--++(-1/2,+\v/2)}
\newcommand{\rel}{--++(-1/2,+\v/2)--++(1/2,+\v/2)}

\newcommand{\blueweight}[2]{
    \filldraw[blue weight]({2*(#1-1)+#2-1},{+\v*(#2-1)})--++(2,0)--++(1,\v)--++(-2,0)--cycle;
}

\newcommand{\redcirc}[2]{
    \filldraw[color=white, fill=red, line width=\circwidth]({#1},{#2}) circle (\circsize);
}
\newcommand{\whitecirc}[2]{
    \filldraw[color=black, fill=white, line width=\circwidth]({#1},{#2}) circle (\circsize);
}
\newcommand{\colourcirc}[3]{
    \filldraw[color=white, fill=#3, line width=\circwidth]({#1},{#2}) circle (\circsize);
}
\newcommand{\halfcirc}[3]{
    \begin{scope}[shift={({#1},{#2})}]
        \fill[#3] (0,0) circle (\circsize);
        \fill[red] (0,0) -- (0:\circsize) arc (0:180:\circsize);
        \draw[white, line width=\circwidth] (0,0) circle (\circsize);
    \end{scope}
}
\newcommand{\whiteredcirc}[2]{
    \filldraw[color=red, fill=white, line width=\circwidth]({#1},{#2}) circle (\circsize);
    \draw({#1},{#2})node{\tiny$+$};
}
\newcommand{\blueredcirc}[2]{
    \filldraw[color=red, fill=blue1, line width=\circwidth]({#1},{#2}) circle (\circsize);
    \draw({#1},{#2})node{\tiny$\star$};
}

\newcommand{\rcirc}{{\tikz{\filldraw[color=red, fill=red, line width=\circwidth](0,0) circle (\circsize)}}}
\newcommand{\bacirc}{{\tikz{\filldraw[color=blue1, fill=blue1, line width=\circwidth](0,0) circle (\circsize)}}}

\newcommand{\wcirc}{
    \begin{tikzpicture}
        \filldraw[draw,color=black, fill=white, line width=\circwidth](0,0) circle (\circsize);
    \end{tikzpicture}
}
\newcommand{\wrcirc}{
    \begin{tikzpicture}
        \filldraw[color=red, fill=white, line width=\circwidth](0,0) circle (\circsize);
        \clip (0,0) circle (\circsize);
        \node at (0,0) {\tiny$+$};
    \end{tikzpicture}
}
\newcommand{\brcirc}{
    \begin{tikzpicture}
        \filldraw[color=red, fill=blue1, line width=\circwidth](0,0) circle (\circsize);
        \clip (0,0) circle (\circsize);
        \draw(0,0)node[draw]{\tiny$\star$};
    \end{tikzpicture}
}

\newcommand{\perparrow}[7]{
    \def\rd{(#5)/sqrt((#3-(#1))^2+(#4-(#2))^2)}
    \def\xd{(#4-(#2))*\rd}
    \def\yd{(#1-(#3))*\rd}
    \draw[-stealth] ({#1+\xd},{#2+\yd})--node[#6]{#7}({#3+\xd},{#4+\yd});
}

\newcommand{\double}[4]{
    \begin{scope}
        \draw[#3]({#1},{#2-1/15})--++(#4,0);
        \clip({#1},{#2})--++(#4,0)--++(0,1)--++(-#4,0)--++(0,-1);
        \draw[red line]({#1},{#2+1/15})--++(#4,0);
    \end{scope}
}

\newcommand{\cell}[3]{
    \draw({2*#1},{2*#2}) rectangle(2,2);
    \draw({2*#1+1},{2*#2+1}) node{#3};
}

\newcommand{\LCHEV}[5]{
    \filldraw[gray fill](#1,+\v*#1)--++(2,0)--++(1,-\v)--node[right]{\smfont$#3$}++(1,+\v)--node[right]{\smfont$#5$}++(-1,+\v)--node[above]{\smfont$#4$}++(-2,0)--node[left]{\smfont$#2$}cycle;
    \draw[gray](#1+2,+\v*#1)--++(1,+\v);
}
\newcommand{\LDCHEV}[1]{
    \filldraw[light gray](#1,+\v*#1)--++(2,0)--++(1,-\v)--++(1,+\v)--++(-1,+\v)--++(-2,0)--cycle;
    \draw[gray,dashed](#1,+\v*#1)--++(1,+\v);
    \draw[gray,dashed](#1+2,+\v*#1)--++(1,+\v);
    \draw[gray,dashed](#1+3,{\vx*(#1-1)})--++(1,+\v);
    \draw[gray](#1,+\v*#1)--++(2,0)--++(1,-\v);
    \draw[gray](#1+1,{\vx*(#1+1)})--++(2,0)--++(1,-\v);
}
\newcommand{\RCHEV}[5]{
    \filldraw[gray fill](#1,+\v*#1)--node[left]{\smfont$#4$}++(1,-\v)--node[below]{\smfont$#5$}++(2,0)--node[right]{\smfont$#3$}++(1,+\v)--++(-2,0)--++(-1,+\v)--node[left]{\smfont$#2$}cycle;
    \draw[gray](#1+1,{\vx*(#1-1)})--++(1,+\v);
}
\newcommand{\RDCHEV}[1]{
    \filldraw[light gray](#1,+\v*#1)--++(1,-\v)--++(2,0)--++(1,+\v)--++(-2,0)--++(-1,+\v)--cycle;
    \draw[gray,dashed](#1,+\v*#1)--++(1,+\v);
    \draw[gray,dashed](#1+1,{\vx*(#1-1)})--++(1,+\v);
    \draw[gray,dashed](#1+3,{\vx*(#1-1)})--++(1,+\v);
    \draw[gray](#1,+\v*#1)--++(1,-\v)--++(2,0);
    \draw[gray](#1+1,{\vx*(#1+1)})--++(1,-\v)--++(2,0);
}
\newcommand{\BLOCK}[5]{
    \filldraw[gray fill](#1,+\v*#1)--node[left]{\smfont$#2$}++(1,-\v)--node[below]{\smfont$#3$}++(2,0)--node[right]{\smfont$#5$}++(-1,+\v)--node[above]{\smfont$#4$}cycle;
}
\newcommand{\LCHEVC}[5]{
    \draw[gray](#1,+\v*#1)--++(2,0)--++(1,-\v)--node[right,black]{\smfont$#3$}++(1,+\v)--node[right,black]{\smfont$#5$}++(-1,+\v)--node[above,black]{\smfont$#4$}++(-2,0)--node[left,black]{\smfont$#2$}cycle;
    \draw[gray](#1+2,+\v*#1)--++(1,+\v);
}
\newcommand{\LDCHEVC}[1]{
    \draw[gray,dashed](#1,+\v*#1)--++(1,+\v);
    \draw[gray,dashed](#1+2,+\v*#1)--++(1,+\v);
    \draw[gray,dashed](#1+3,{\vx*(#1-1)})--++(1,+\v);
    \draw[gray](#1,+\v*#1)--++(2,0)--++(1,-\v);
    \draw[gray](#1+1,{\vx*(#1+1)})--++(2,0)--++(1,-\v);
}
\newcommand{\RCHEVC}[5]{
    \draw[gray](#1,+\v*#1)--node[left,black]{\smfont$#4$}++(1,-\v)--node[below,black]{\smfont$#5$}++(2,0)--node[right,black]{\smfont$#3$}++(1,+\v)--++(-2,0)--++(-1,+\v)--node[left,black]{\smfont$#2$}cycle;
    \draw[gray](#1+1,{\vx*(#1-1)})--++(1,+\v);
}
\newcommand{\RDCHEVC}[1]{
    \draw[gray,dashed](#1,+\v*#1)--++(1,+\v);
    \draw[gray,dashed](#1+1,{\vx*(#1-1)})--++(1,+\v);
    \draw[gray,dashed](#1+3,{\vx*(#1-1)})--++(1,+\v);
    \draw[gray](#1,+\v*#1)--++(1,-\v)--++(2,0);
    \draw[gray](#1+1,{\vx*(#1+1)})--++(1,-\v)--++(2,0);
}
\newcommand{\BLOCKC}[5]{
    \draw[gray](#1,+\v*#1)--node[left,black]{\smfont$#2$}++(1,-\v)--node[below,black]{\smfont$#3$}++(2,0)--node[right,black]{\smfont$#5$}++(-1,+\v)--node[above,black]{\smfont$#4$}cycle;
}
\newcommand{\LCUT}[1]{
    \filldraw[light gray](#1,+\v*#1)--++(1,-\v)--++(2,0)--++(-1,-\v)--++(-1,\v)--++(-2,0)--cycle;
    \draw[gray,dotted](#1,+\v*#1)--++(1,-\v)--++(2,0);
}
\newcommand{\RCUT}[1]{
    \filldraw[light gray](#1,+\v*#1)--++(1,-\v)--++(2,0)--++(-1,-\v)--++(-1,\v)--++(-2,0)--cycle;
    \draw[gray,dotted](#1-1,{\vx*(#1-1)})--++(2,0)--++(1,-\v);
}
\newcommand{\LWTE}[1]{\whitecirc{{#1+1/2}}{{\vx*(#1+1/2)}}}
\newcommand{\RWTE}[1]{\whitecirc{{#1+7/2}}{{\vx*(#1-1/2)}}}

\newcommand{\LWRD}[1]{\whiteredcirc{{#1+1/2}}{{\vx*(#1-1/2)}}}
\newcommand{\RWRD}[1]{\whiteredcirc{{#1+5/2}}{{\vx*(#1-1/2)}}}
\newcommand{\LWHT}[1]{\whitecirc{{#1+1/2}}{{\vx*(#1-1/2)}}}
\newcommand{\RWHT}[1]{\whitecirc{{#1+5/2}}{{\vx*(#1-1/2)}}}
\newcommand{\LBRD}[1]{\blueredcirc{{#1+1/2}}{{\vx*(#1+1/2)}}}
\newcommand{\RBRD}[1]{\blueredcirc{{#1+7/2}}{{\vx*(#1-1/2)}}}
\newcommand{\clipcol}[2]{
    \begin{scope}
        \clip(0,0)--++(1,-\v)--++(2,0)--++(#1,+\v*#1)--++(-1,+\v)--++(-2,0)--cycle;
        #2
    \end{scope}
}
\newcommand{\clipdouble}[3]{
    \begin{scope}
        \clip(#1,#2)--++(2,0)--++(1,\v)--++(-2,0)--cycle;
        \begin{scope}[shift={(-1/2,+\v/2)}]
            \double{#1}{#2}{#3}{4}
        \end{scope}
    \end{scope}
}
\newcommand{\drawdiamond}[5]{
    \begin{scope}
        \clip(0,0)--++(1,-\v)--++(1,\v)--++(-1,\v)--cycle;
        #5
    \end{scope}
    \draw[gray](0,0)--node[left,black]{\smfont$#3$}++(1,-\v)--node[right,black]{\smfont$#4$}++(1,\v)--node[right,black]{\smfont$#2$}++(-1,\v)--node[left,black]{\smfont$#1$}cycle;
}
\newcommand{\Dthor}[1]{\draw[#1](0,+\v/2)--++(2,0);}
\newcommand{\Dbhor}[1]{\draw[#1](0,-\v/2)--++(2,0);}
\newcommand{\Dthord}[1]{\double{0}{+\v/2}{#1}{2}}
\newcommand{\Dbhord}[1]{\double{0}{-\v/2}{#1}{2}}

\newcommand{\Drelb}[1]{\draw[#1](2,-\v)--++(-1,\v)--++(1,\v);}
\newcommand{\Dlelb}[1]{\draw[#1](0,-\v)--++(1,\v)--++(-1,\v);}
\newcommand{\Drdia}[1]{\draw[#1](2,-\v)--++(-2,+\v*2);}
\newcommand{\Dldia}[1]{\draw[#1](0,-\v)--++(2,+\v*2);}

\newcommand{\drawrightshear}[5]{
    \begin{scope}
        \clip(0,0)--++(2,0)--++(1,\v)--++(-2,0)--cycle;
        #5
    \end{scope}
    \draw[gray](0,0)--node[below,black]{\smfont$#3$}++(2,0)--node[right,black]{\smfont$#4$}++(1,\v)--node[above,black]{\smfont$#2$}++(-2,0)--node[left,black]{\smfont$#1$}cycle;
}
\newcommand{\Rver}[1]{\draw[#1](1/2,-\v/2)--++(2,+\v*2);}
\newcommand{\Rhor}[1]{\draw[#1](0,+\v/2)--++(3,0);}
\newcommand{\Rhord}[1]{\double{0}{+\v/2}{#1}{3}}

\newcommand{\Rrelb}[1]{\draw[#1](7/2,+\v/2)--++(-2,0)--++(-1,-\v);}
\newcommand{\Rlelb}[1]{\draw[#1](-1/2,+\v/2)--++(2,0)--++(1,\v);}
\newcommand{\Rrdia}[1]{\draw[#1](3,0)--++(-2,+\v*2);}
\newcommand{\Rldia}[1]{\draw[#1](0,\v)--++(2,-\v*2);}

\newcommand{\drawleftshear}[5]{
    \begin{scope}
        \clip(1,0)--++(2,0)--++(-1,\v)--++(-2,0)--cycle;
        #5
    \end{scope}
    \draw[gray](1,0)--node[below,black]{\smfont$#4$}++(2,0)--node[right,black]{\smfont$#2$}++(-1,\v)--node[above,black]{\smfont$#1$}++(-2,0)--node[left,black]{\smfont$#3$}cycle;
}
\newcommand{\Lver}[1]{\draw[#1](5/2,-\v/2)--++(-2,+\v*2);}
\newcommand{\Lhor}[1]{\draw[#1](0,+\v/2)--++(3,0);}
\newcommand{\Lrelb}[1]{\draw[#1](1/2,+\v*3/2)--++(1,-\v)--++(2,0);}
\newcommand{\Llelb}[1]{\draw[#1](5/2,-\v/2)--++(-1,\v)--++(-2,0);}
\newcommand{\Lrdia}[1]{\draw[#1](3/2,-\v/2)--++(2,+\v*2);}
\newcommand{\Lldia}[1]{\draw[#1](0,0)--++(2,+\v*2);}
\newcommand{\threetile}[4]{
    \draw[gray](0,0)--node[below,black]{\smfont$#2$}++(2,0)--node[right,black]{\smfont$#3$}++(1,\v)--++(-2,0)--node[left,black]{\smfont$#1$}cycle;
    \draw(-1,\v)node[above]{#4};
}
\newcommand{\clipx}[1]{
    \begin{scope}
        \clip(0,0)--++(2,0)--++(1,\v)--++(-2,0)--cycle;
        #1
    \end{scope}
}

\newcommand{\uptri}[5]{
    \begin{scope}
        \clip(0,0)--++(2,0)--++(-1,\v)--cycle;
        #5
    \end{scope}
    \draw[gray](0,0)--node[below,black]{\smfont$#2$}++(2,0)--++(-1,\v)--node[left,black]{\smfont$#1$}cycle;
    \draw(5/4,+\v*3/4)node[right]{\smfont$#3$};
    \draw(7/4,+\v/4)node[right]{\smfont$\color{purple2}\boldsymbol{#4}$};
}
\newcommand{\downtri}[5]{
    \begin{scope}
        \clip(0,\v)--++(2,0)--++(-1,-\v)--cycle;
        #5
    \end{scope}
    \draw[gray](0,\v)--node[above,black]{\smfont$#1$}++(2,0)--node[right,black]{\smfont$#2$}++(-1,-\v)--cycle;
    \draw(1/4,+\v*3/4)node[left]{\smfont$#3$};
    \draw(3/4,+\v/4)node[left]{\smfont$\color{purple2}\boldsymbol{#4}$};
}
\input{pictures}

\begin{document}
    \maketitle
    
    \begin{abstract}
        We present the first positive combinatorial rule for expanding the product of a permuted-basement Demazure atom and a Schur polynomial. Special cases of permuted-basement Demazure atoms include Demazure atoms and characters. These cases have known tableau formulas for their expansions when multiplied by a Schur polynomial, due to Haglund, Luoto, Mason and van Willigenburg. We find a vertex model formula, giving a new rule even in these special cases, extending a technique introduced by Zinn-Justin for calculating Littlewood--Richardson coefficients.
        
        We derive a coloured vertex model for permuted-basement Demazure atoms, inspired by Borodin and Wheeler's model for non-symmetric Macdonald polynomials. We make this model compatible with an uncoloured vertex model for Schur polynomials, putting them in a single framework. Unlike previous work on structure coefficients via vertex models, a remarkable feature of our construction is that it relies on a Yang--Baxter equation that only holds for certain boundary conditions. However, this restricted Yang--Baxter equation is sufficient to show our result.
    \end{abstract}
    
    \section{Introduction}
    The \itc{Schur polynomials} $s_\la(x)$, indexed by partitions $\la$, are an important $\Z$-basis for the ring of symmetric polynomials. The product of two Schur polynomials expands into a linear combination with positive integer structure coefficients, called \itc{Littlewood--Richardson coefficients}. These integers have many important applications, particularly in the representation theory of the general linear group and symmetric group, Schubert calculus and algebraic combinatorics. There are many combinatorial rules for computing these integers, the original being the eponymous Littlewood--Richardson rule \cite{LR34}.
    
    Knutson, Tao and Woodward developed an alternative formula, calculating Littlewood--Richardson coefficients as the number of certain tilings called \itc{puzzles} \cite{KT03,KTW03}. Puzzles were brought into the context of integrable systems by Zinn-Justin who reproved this rule, envisioning a particle model which places puzzles and Schur polynomials in one unified picture \cite{Z09}. We build on this technique, giving the first combinatorial formula for the product of a Schur polynomial with a \itc{permuted-basement Demazure atom} $\A_\al^\si(x)$, indexed by weak compositions $\al$ with permutation parameter $\si$.
    
    We derive a manifestly positive, combinatorial rule for the structure coefficients $a_{\al\la}^\be(\si)$ in the expansion
    \begin{align*}
        \A_\al^\si(x)s_\la(x)=\sum_\be a_{\al\la}^\be(\si)\A_\be^\si(x),
    \end{align*}
    where the summation is over weak compositions $\be$. Alexandersson and Sawhney \cite{AS19} show this expansion is positive and express interest in a combinatorial rule, which we now have.
    
    Several bases of the ring of multivariate polynomials are known to expand positively when multiplied by a Schur polynomial \cite{S20}. Two well-known bases that exhibit this phenomenon are \itc{Demazure atoms} and \itc{Demazure characters}. Demazure characters, also called key polynomials, were introduced by Demazure \cite{D74} and studied combinatorially by Lascoux and Sch{\"u}tzenberger \cite{LS90}. They are characters of representations of a Borel subgroup, known as Demazure modules, and they generalize Schur polynomials. Demazure characters are also a special case of Kohnert polynomials, introduced by Assaf and Searles \cite{AS22}. Haglund, Luoto, Mason and van Willigenburg \cite{HLMW11} give positive rules for the products of a Schur polynomial with a Demazure atom or character, expressed in terms of certain tableaux known as skyline fillings; see also Assaf \cite{A23}. There are many interesting open problems regarding expansions of Demazure polynomials, such as whether the product of two characters expands positively into atoms \cite{P16}.
    
    The polynomials $\A_\al^\si(x)$ are a specialization of \itc{permuted-basement non-symmetric Macdonald polynomials} $E_\al^\si(x;q,t)$, introduced by Ferreira \cite{F11} and expanded on by Alexandersson \cite{A19}; see also Guo and Ram \cite{GR22}. We follow the conventions of Alexandersson, setting $\A_\al^\si(x):=E_\al^\si(x;0,0)$; note that some conventions in the literature reverse the order of $\al$. The author's work in an FPSAC extended abstract for this paper \cite{M24} originally considered separate models for Demazure atoms and characters. Now, both are captured by one model depending on the parameter $\si$ where $\si=(1,\ldots,n)$ is the atom case and $\si=(n,\ldots,1)$ is the character case.
    
    \itc{Vertex models} are a tool in statistical mechanics used to study particle systems. A notable example is the six-vertex or square-ice model; see \cite{B82} for detailed discussion. Each configuration of vertices in a graph is assigned a weight contributing to its \itc{partition function}. Vertex models are often in a two-dimensional lattice where vertices may be represented with tiles, which is the convention we adopt in this paper.
    
    Vertex model approaches have shown increasing promise in Schubert calculus and algebraic combinatorics with many recent papers demonstrating their applicability, e.g. \cite{ABW23,BW16,BWZ15,BFHTW23,CGKM22,CFYZZ23,GW20,KZ20,KZ21,KZ23,MS13,WZ16}. Often, an important polynomial can be realized as the partition function of a vertex model, making analysis amenable to a standard set of techniques from integrable systems. Schur polynomials, Demazure atoms and non-symmetric Macdonald polynomials all have vertex model formulations. Our model for $\A_\al^\si(x)$ is inspired by Borodin and Wheeler's model for $E_\al^\si(x;q,t)$ \cite{BW22}, but we make significant modifications. Our model now bears more resemblance to the model for Demazure atoms given by Brubaker, Buciumas, Bump and Gustafsson \cite{BBBG21}; see \Cref{rem.BBBG}.
    
    Our proof is completely combinatorial. We establish equivalence by gluing vertex models together in two different ways, similar to \cite{WZ19,Z09}. Unlike these previous results, which rely on a \itc{Yang--Baxter equation} that holds for all possible boundary conditions, our equation in \Cref{lem.column} only holds for certain boundaries. Happily, these restrictions are satisfied when needed. Results in other types involve going beyond the Yang--Baxter equation in a more fundamental way. Buciumas and Scrimshaw \cite{BS22} consider vertex models for Demazure atoms and characters in type $B$ and $C$ where one of the required Yang--Baxter equations does not hold, but a different relation allows them to characterize the partition functions. Zhong \cite{Z22} finds a similar phenomenon in stochastic type $C$ vertex models.
    
    It is tempting to define a notion of ``double'' Demazure atoms in two sets of variables, analogous to the double Schur and Grothendieck polynomials in \cite{WZ19,Z09}. These polynomials correspond to equivariant cohomology classes. However, we do not find a rule when adding this extra set of variables. In particular, \Cref{lem.column} does not hold when introducing ``equivariant'' tiles.
    
    Our results suggest further applications, such as extensions to the Grothendieck models in \cite{WZ19}. \itc{Grothendieck polynomials}, \itc{Lascoux atoms} and \itc{Lascoux polynomials} are the $K$-theoretic analogues of Schur polynomials, Demazure atoms and Demazure characters respectively. Orelowitz and Yu \cite{OY23} have recently given a tableau formula for the expansion of a Lascoux polynomial and a stable Grothendieck. Buciumas, Scrimshaw and Weber \cite{BSK20} also have Vertex models for Lascoux polynomials.
    
    A benefit of this approach is that we may develop vertex models independently and then fit them into this framework, allowing one to test rules assuming a Yang--Baxter such as \Cref{lem.column} holds. As an offshoot, our notion of \itc{$\si$-extendable} in \Cref{sec.skyline} yields a poset on weak compositions for each $\si$ which determines a branching formula for atoms. This can be the subject of future work.
    
    In \Cref{sec.models}, we introduce a vertex model for Schur polynomials and our model for atoms. \Cref{sec.skyline} gives the skyline formulation for atoms and introduces the $\si$-extendable relation, yielding a branching formula in \Cref{cor.expansion}. In \Cref{sec.bijection}, we show that our vertex models satisfy the same recurrence, making a bijection clear. In \Cref{sec.theorem}, we state our expansion rule, \Cref{thm.diamonds}, and then prove it in \Cref{sec.proof}. We reserve proof of our key technical result, the Column \Cref{lem.column}, for \Cref{sec.colproof}.
    
    \section{Vertex models for atoms and Schurs}\label{sec.models}
    We give vertex model formulations for permuted-basement Demazure atoms and Schur polynomials, using the convention of depicting ``vertices'' in the models with tiles, aligning with \cite{WZ19,Z09}.
    
    Throughout, $x=(x_1,\ldots,x_n)$ is a list of $n$ variables and we set $[n]:=\{1,\ldots,n\}$. A \itc{weak composition} $\al=(\al_1,\ldots,\al_n)$ is a sequence of non-negative integers. Each integer $\al_i$ is the \itc{part} of $\al$ at index $i$ and the \itc{length} of $\al$ is its number of parts, denoted $\len(\al)$. The largest part in $\al$ is denoted $\max(\al)$. A \itc{partition} $\la=(\la_1,\ldots,\la_n)$ is a weak composition with parts sorted in descending order. A \itc{permutation} $\si=(\si(1),\ldots,\si(n))$ is an ordering of the integers in $[n]$. Throughout, $\al$ and $\be$ are weak compositions, $\la$ is a partition, $\si$ is a permutation and all have length $n$.
    
    We label certain boundaries in our models with strings that re-encode $\al$ or $\la$. Let $\mu$ be the partition with the same parts as $\al$. Enclose the Young diagram of $\mu$ (in English convention) within the top left corner of a rectangle with a North-East lattice path as depicted in \Cref{ex.compositions}. Label East steps $0$. Each index $i\in[n]$ labels a North step in a row of width $\al_i$; if multiple rows have the same width, their labels are sorted in descending order from bottom to top. We obtain the string $\al^*$ by reading labels along the lattice path, starting from the bottom.
    
    We re-encode a partition $\la$ similarly in two ways, with strings $\la^-$ and $\la^+$. For $\la^-$, label East steps $0$ and North steps $1$. For $\la^+$, label East steps with the symbol $+$ and North steps $0$. Read strings from the lattice path as before. The string $\la^+$ appears in later sections.
    
    \begin{example}\label{ex.compositions}
        Let $\al=(0,3,0,1,3)$ and $\la=(4,4,2,1,0)$. To produce $\al^*$, we enclose the partition $\mu=(3,3,1,0,0)$. In this case, $\al_1=\al_3=0$, so rows of width $0$ are labelled $3$ and $1$ from bottom to top. Similarly, $\al_2=\al_5=3$, so rows of width $3$ are labelled $5$ and $2$ from bottom to top.
        \begin{center}
            \composition{\ascale}
        \end{center}
        Reading the labels starting from the bottom produces the desired strings:
        \begin{align*}
            \al^* &= 31040052 \\
            \la^- &= 101010011 \\
            \la^+ &= \text{0+0+0++00}
        \end{align*}
    \end{example}
    
    We construct our models by tiling a lattice with rhombic tiles, depicted below, where $i,j,f,g$ are in $[n]$, $i\leq j$ and $\si(f)<\si(g)$.
    \begin{center}
        \demazuretiles{\ascale}
    \end{center}
    The final tile has weight $x_c$, where $c$ is the column number that the tile occurs in, and all other tiles have weight $1$. Columns are numbered $1$ to $n$ from left to right. Tiles of weight $1$ are \itc{trivial} and the final tile is \itc{nontrivial}. We define \itc{permuted-basement Demazure atoms (atoms)} $\A_\al^\si(x)$ and \itc{Schur polynomials} $s_\la(x)$ in terms of vertex models with the tiles. We place the tiles within a rhombic lattice with fixed boundary labels. Labels between tiles and boundaries must match to be a legal tiling. We schematically outline both models below:
    \begin{center}
        \asmodels{\bscale}
    \end{center}
    
    We call the model for $\A_\al^\si(x)$ the \itc{atom model} and the model for $s_\la(x)$ the \itc{Schur model}. Label the left boundary of the atom model with $\al^*$ and the bottom with $\si^{-1}$ (this choice makes our polynomials align with the definition in terms of skyline fillings in \Cref{sec.skyline}). Label the left boundary of the Schur model with $\la^-$ and the bottom with all $1$'s, denoted $\bacirc^n$. Label unspecified boundaries with all $0$'s. The weight of a tiling is the product of its tile weights and the sum of all possible tilings is the \itc{partition function} of the model.
    
    \begin{example}\label{ex.models}
        Let $\al=(1,0,2,2)$ and $\si=4123$, so that $\si^{-1}=2341$ and $\al^*=201043$. There are four tilings of the atom model, showing $\A_\al^\si(x_1,x_2,x_3,x_4)=x_1x_2^2x_3^2+x_1x_2^2x_3x_4+x_1x_2x_3^2x_4+x_2^2x_3^2x_4$.
        \begin{center}
            \atomexample{\bscale}
        \end{center}
        Next, let $\la=(2,2,1)$, so that $\la^-=01011$. There are three tilings of the Schur model, showing $s_\la(x_1,x_2,x_3)=x_1^2x_2^2x_3+x_1^2x_2x_3^2+x_1x_2^2x_3^2$.
        \begin{center}
            \schurexample{\bscale}
        \end{center}
    \end{example}
    
    \begin{remark}\label{rem.BBBG}
        The Schur model is the same as the one considered by Zinn-Justin in \cite{Z09} and we can think of the atom model as a ``coloured'' version of the Schur model. Brubaker, Buciumas, Bump and Gustafsson \cite{BBBG21} have a vertex model for Demazure atoms, which is the case where $\si=\id$. Our tiles are essentially the same as their vertices in this case, with a permutation applied to the colours. Their model also has different weights, picking up an extra factor $x^\rho$.
    \end{remark}
    \section{Skyline fillings and $\si$-extendable decompositions}\label{sec.skyline}
    We give an alternative definition of atoms in terms of the semi-skyline augmented fillings introduced by Mason \cite{M08,M09}, only with a permuted basement. We use this definition to give a branching formula for atoms. A \itc{semi-skyline augmented filling (SSAF)} of \itc{shape} $\al$ with \itc{basement} $\si$ is a function 
    \begin{align*}
        F:\{(i,j)\mid0\leq j\leq\al_i,i\in[n]\}\to\Z_+,
    \end{align*}
    which must satisfy certain conditions. We refer to the pairs $(i,j)$ in the domain of $F$ as \itc{cells} and write $F((i,j)):=F(i,j)$ as a shorthand. We say a cell is in $F$ if it is in the domain of $F$. The \itc{entry} of a cell $c$ is the value $F(c)$ and a cell with entry $e$ is an \itc{$e$-cell} of $F$. The \itc{basement} of a filling refers to cells $(i,0)$ for $i\in[n]$, which are always assigned the entry $\si(i)$. SSAFs must have no \itc{descents}, meaning $F(i,j+1)\leq F(i,j)$ for $j\geq0$.
    
    Lastly, SSAFs must satisfy certain conditions for ``triples,'' which come in two flavours. Let $1\leq\ell<r\leq n$; if $\al_\ell\geq\al_r$, a \itc{type A triple} within columns $\ell$ and $r$ is of the form $\{(\ell,i+1),(r,i+1),(\ell,i)\}$ for $i\geq0$. If $\al_\ell<\al_r$, a \itc{type B triple} within columns $\ell$ and $r$ is of the form $\{(r,i+1),(\ell,i),(r,i)\}$ for $i\geq0$. We illustrate below, labelling cells with their entries $a,b,c$:
    \begin{center}
        \triples{2*\bscale}
    \end{center}
    In either case, we say a triple is \itc{inversion} if $b\notin[a,c]$ and \itc{coinversion} if $b\in[a,c]$. All triples within an SSAF are inversion. Denote the set of all SSAFs of shape $\al$ with basement $\si$ by $\SSAF_\si(\al)$. The polynomials $\A_\al^\si(x)$ are given by a summation over fillings in $\SSAF_\si(\al)$:
    \begin{align*}
        \A_\al^\si(x) = \sum_{F\in\SSAF_\si(\al)}x^F,
    \end{align*}
    where $x^F$ is the product of $x_{F(c)}$ over all cells $c$ in $F$.
    \begin{example}
        If $\al=(1,0,2,1)$ and $\si=(2,1,4,3)$, there are five fillings in $\SSAF_\si(\al)$:
        \begin{center}
            \diagram{2*\bscale}{{{2,1},{1},{4,4,2},{3,3}}}
            \diagram{2*\bscale}{{{2,1},{1},{4,4,3},{3,3}}}
            \diagram{2*\bscale}{{{2,1},{1},{4,4,4},{3,3}}}
            \diagram{2*\bscale}{{{2,2},{1},{4,4,3},{3,3}}}
            \diagram{2*\bscale}{{{2,2},{1},{4,4,4},{3,3}}}
        \end{center}
        This shows that $\A_\al^\si(x)=x_1x_2x_3x_4+x_1x_3^2x_4+x_1x_3x_4^2+x_2x_3^2x_4+x_2x_3x_4^2$.
    \end{example}
    \begin{definition}\label{def.extendable}
        Given weak compositions $\al$ and $\be$ of length $n$ and $\si\in S_n$, we say that $\be$ is \itc{$\si$-extendable} to $\al$ if $\al_i\geq\be_i$ for $i\in[n]$ and the following conditions hold for each pair $(\ell,r)$ satisfying $1\leq\ell<r\leq n$:
        \begin{enumerate}
            \item If $\al_\ell\geq\al_r$ and $\be_\ell\geq\be_r$, then $\al_r\leq\be_\ell$.
            \item If $\al_\ell<\al_r$ and $\be_\ell<\be_r$, then $\al_\ell<\be_r$.
            \item If $\al_\ell\geq \al_r$ and $\be_\ell<\be_r$, then $\al_r=\be_r$ and $\si(\ell)<\si(r)$.
            \item If $\al_\ell<\al_r$ and $\be_\ell\geq\be_r$, then $\al_\ell=\be_\ell$ and $\si(\ell)>\si(r)$.
        \end{enumerate}
    \end{definition}
    \pagebreak
    \begin{lemma}\label{lem.below}
        Let $F\in\SSAF_\si(\al)$ for a weak composition $\al$ of length $n$ and $\si\in S_n$ and let $1\leq\ell<r\leq n$.
        \begin{enumerate}
            \item If $\al_\ell\geq\al_r$ and $0\leq a\leq\al_r$, then:
            \begin{enumerate}
                \item If $F(\ell,a)<F(r,a)$, then $F(\ell,i)<F(r,i+1)$ for all $0\leq i\leq a-1$.
                \item If $F(\ell,a)>F(r,a)$, then $F(\ell,i)>F(r,i)$ for all $a\leq i\leq\al_r$.
            \end{enumerate}
            \item If $\al_\ell<\al_r$ and $0\leq a\leq\al_\ell$, then:
            \begin{enumerate}
                \item If $F(\ell,a)<F(r,a)$, then $F(\ell,i)<F(r,i+1)$ for all $a\leq i\leq\al_\ell$.
                \item If $F(\ell,a)>F(r,a)$, then $F(\ell,i)>F(r,i)$ for all $0\leq i\leq a$.
            \end{enumerate}
        \end{enumerate}
    \end{lemma}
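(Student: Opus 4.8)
The plan is to derive everything from two structural facts about $F$: entries are weakly decreasing up each column (the no-descent condition), and every triple lying in the column pair $(\ell,r)$ is an inversion triple. Write $L_j:=F(\ell,j)$ and $R_j:=F(r,j)$. The first step is to package these facts into a per-height dichotomy. When $\al_\ell\ge\al_r$, for each $0\le i\le\al_r-1$ the type A triple $\{(\ell,i+1),(r,i+1),(\ell,i)\}$ being inversion, combined with $L_{i+1}\le L_i$, says precisely that $R_{i+1}<L_{i+1}$ \emph{or} $R_{i+1}>L_i$. When $\al_\ell<\al_r$, for each $0\le i\le\al_\ell$ the type B triple $\{(r,i+1),(\ell,i),(r,i)\}$ being inversion, combined with $R_{i+1}\le R_i$, says that $L_i<R_{i+1}$ \emph{or} $L_i>R_i$. (Applying this at $i=a-1$, resp.\ $i=a$, also shows $L_a\neq R_a$ for $a$ in the stated ranges, so the two hypotheses in each part are genuinely complementary, but we do not need this.) Each propagation claim is then a short monotone induction in which one of these dichotomies is invoked at every step and the ``wrong'' branch is discarded using the monotonicity of the appropriate column.

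Concretely, for part 1(a) I would assume $L_a<R_a$ and induct downward from $i=a-1$ to $i=0$. The base case $i=a-1$ (vacuous unless $a\ge1$) uses the triple at $i=a-1$: one branch gives $R_a<L_a$, which contradicts the hypothesis, so $L_{a-1}<R_a$. For the inductive step, if $L_i<R_{i+1}$ with $1\le i\le a-1$, then $L_i<R_{i+1}\le R_i$, so $L_i<R_i$, and the triple at $i-1$ cannot give $R_i<L_i$, hence $L_{i-1}<R_i$. For part 1(b), assume $L_a>R_a$ and induct upward: if $L_i>R_i$ with $a\le i\le\al_r-1$, the triple at $i$ cannot give $R_{i+1}>L_i$ (else $R_{i+1}>L_i>R_i\ge R_{i+1}$), so $R_{i+1}<L_{i+1}$.

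Parts 2(a) and 2(b) run the same way with the type B dichotomy. For 2(a), $L_a<R_a$ forbids the branch $L_a>R_a$ of the triple at $a$, giving $L_a<R_{a+1}$, and then upward induction (using $L_{i+1}\le L_i$, so $L_{i+1}<R_{i+1}$ forces the branch $L_{i+1}<R_{i+2}$) carries this to $i=\al_\ell$. For 2(b), $L_a>R_a$ starts a downward induction: if $L_i>R_i$ with $1\le i\le a$, then $R_i<L_i\le L_{i-1}$ rules out the branch $L_{i-1}<R_i$ of the triple at $i-1$, so $L_{i-1}>R_{i-1}$. Throughout one must check that the triple invoked at each step actually lies in $F$, but this is automatic: part 1 uses triples at indices $\le\al_r-1\le\al_\ell-1$ and part 2 triples at indices $\le\al_\ell\le\al_r-1$.

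There is no genuine difficulty here beyond bookkeeping: the one thing to be careful about is, at each inductive step, using the monotonicity of the \emph{correct} column (column $r$ in part 1, column $\ell$ in part 2) to eliminate the unwanted branch of the dichotomy, and keeping straight which of the four claims inducts upward versus downward and from which endpoint. So the ``main obstacle'' is simply organizing the four nearly identical inductions cleanly and verifying the index ranges; the mathematical content is entirely contained in the dichotomy extracted in the first step.
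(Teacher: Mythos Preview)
Your proposal is correct and is essentially the same argument as the paper's: both proofs run the four short monotone inductions (downward for 1(a) and 2(b), upward for 1(b) and 2(a)), invoking the inversion-triple condition at each step together with the no-descent property of the appropriate column to rule out one branch. The only cosmetic difference is that you package the triple condition into a clean dichotomy once at the outset, whereas the paper re-derives the needed inequality directly at each inductive step.
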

    \begin{proof}
        \begin{enumerate}
            \item Assume $\al_\ell\geq\al_r$ and $0\leq a\leq\al_r$.
            \begin{enumerate}
                \item Suppose $F(\ell,a)<F(r,a)$. If $a=0$, the inequality is vacuously satisfied. Otherwise, if $F(\ell,a-1)\geq F(r,a)$, then $\{(\ell,a),(r,a),(\ell,a-1)\}$ is a type A coinversion triple, so $F(\ell,a-1)<F(r,a)$. Then by the descent condition, $F(\ell,a-1)<F(r,a-1)$ and by induction, we have $F(\ell,i)<F(r,i+1)$ for all $0\leq i\leq a-1$.
                \item Suppose $F(\ell,a)>F(r,a)$. If $a=\al_r$, we are done. Otherwise, if $F(\ell,a+1)\leq F(r,a+1)$, then $\{(\ell,a+1),(r,a+1),(\ell,a)\}$ is a type A coinversion triple, so $F(\ell,a+1)>F(r,a+1)$. By induction, we have $F(\ell,i)>F(r,i)$ for all $a\leq i\leq\al_r$.
            \end{enumerate}
            \item Assume $\al_\ell<\al_r$ and $0\leq a\leq\al_\ell$.
            \begin{enumerate}
                \item Suppose $F(\ell,a)<F(r,a)$. If $F(\ell,a)\geq F(r,a+1)$ then $\{(r,a+1),(\ell,a),(r,a)\}$ is a type B coinversion triple, so $F(\ell,a)<F(r,a+1)$. If $a=\al_\ell$, we are done; otherwise, $F(\ell,a+1)<F(r,a+1)$ by the descent condition and by induction, we have $F(\ell,i)<F(r,i+1)$ for all $a\leq i\leq\al_\ell$.
                \item Suppose $F(\ell,a)>F(r,a)$. If $a=0$, we are done; otherwise, $F(\ell,a-1)\geq F(\ell,a)>F(r,a)$ and thus $F(\ell,a-1)>F(r,a-1)$ since otherwise $\{(r,a),(\ell,a-1),(r,a-1)\}$ is a type B coinversion triple. By induction, we have $F(\ell,i)>F(r,i)$ for all $0\leq i\leq a$.
            \end{enumerate}
        \end{enumerate}
    \end{proof}
    
    We prove that the atoms $\A_\al^\si(x)$ decompose into a summation over compositions $\be$ that are $\si$-extendable to $\al$. Each such $\be$ is the shape of an SSAF that remains after deleting the non-basement $1$-cells from a filling in $\SSAF_\si(\al)$.
    \begin{theorem}\label{thm.decomposition}
        For a weak composition $\al$ of length $n$ and $\si\in S_n$, we have
        \begin{align*}
            \A_\al^\si(x) = \sum_\text{$\be$ $\si$-extendable to $\al$} x_1^{|\al|-|\be|}\A_\be^\si(0,x_2,\ldots,x_n).
        \end{align*}
    \end{theorem}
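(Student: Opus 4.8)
The plan is to exhibit an explicit weight-preserving bijection realizing the claimed identity. Given $F\in\SSAF_\si(\al)$, let $F'$ be the filling obtained by deleting every non-basement $1$-cell, and let $\be$ be the shape of $F'$. Since $F$ has no descents and $1$ is the smallest possible entry, the $1$-cells in each column of $F$ occupy a contiguous segment at the top; hence $F'$ is supported on the cells $(i,j)$ with $0\le j\le\be_i$ for a genuine weak composition $\be$ with $\be_i\le\al_i$, and $F'$ retains the basement of $F$ and uses only entries $\ge 2$. The number of deleted cells is $|\al|-|\be|$, so $x^F=x_1^{|\al|-|\be|}x^{F'}$ with $x^{F'}$ a monomial in $x_2,\ldots,x_n$. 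Thus the theorem follows once we show that $F\mapsto(\be,F')$ is a bijection from $\SSAF_\si(\al)$ onto the set of pairs $(\be,G)$ with $\be$ $\si$-extendable to $\al$ and $G\in\SSAF_\si(\be)$ having no non-basement $1$-cell, since $\sum_G x^G$ over the latter fillings equals $\A_\be^\si(0,x_2,\ldots,x_n)$.

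The first main task is to check the forward map is well defined: that $F'$ is an SSAF and that $\be$ is $\si$-extendable to $\al$. The no-descent condition for $F'$ is immediate. For the triple conditions one must be careful, because a pair of columns $(\ell,r)$ with $\al_\ell\ge\al_r$ may satisfy $\be_\ell<\be_r$, so a type A configuration in $\al$ can become a type B configuration in $\be$, and conversely; the comparisons between column entries of $F'$ that are needed to rule out coinversion triples are recovered by applying \Cref{lem.below} to $F$. The same toolkit --- the inversion-triple conditions on $F$ together with \Cref{lem.below} --- yields each of the four clauses of \Cref{def.extendable}. For a representative case, suppose $\al_\ell\ge\al_r$ and $\be_\ell<\be_r$. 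Then $(\ell,\be_\ell+1)$ and $(\ell,\be_r)$ are deleted $1$-cells; forming the type A triples $\{(\ell,\be_\ell+1),(r,\be_\ell+1),(\ell,\be_\ell)\}$ and $\{(\ell,\be_r+1),(r,\be_r+1),(\ell,\be_r)\}$ in $F$ and using that both are inversion forces $\si(\ell)<\si(r)$ (after feeding the inequality $F(\ell,\be_\ell+1)<F(r,\be_\ell+1)$ into \Cref{lem.below} to push it down to the basement row) and $\al_r=\be_r$ (otherwise the second triple would have entries $(1,1,1)$, a coinversion). This is precisely clause 3; clauses 1, 2 and 4 are handled the same way.

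Conversely, given $\be$ that is $\si$-extendable to $\al$ and $G\in\SSAF_\si(\be)$ with no non-basement $1$-cell, define $\Psi(\be,G)$ by appending, in each column $i$, the cells $(i,\be_i+1),\ldots,(i,\al_i)$ all with entry $1$. Then $\Psi$ and the deletion map are visibly mutually inverse once we know $\Psi(\be,G)\in\SSAF_\si(\al)$, so the remaining task is to verify the triple conditions of the enlarged shape. A type A or type B triple of shape $\al$ either lies entirely inside shape $\be$ --- where the type may have flipped relative to $G$, so that keeping it an inversion requires \Cref{lem.below} applied to $G$ together with the relevant clause of $\si$-extendability --- or it meets a newly appended $1$-cell, in which case a short case check using \Cref{def.extendable} finishes it; for instance, in a type A triple $\{(\ell,i+1),(r,i+1),(\ell,i)\}$ with entries $a,b,c$ whose lowest cell $(\ell,i)$ is a new $1$-cell, necessarily $(\ell,i+1)$ is new as well, so $a=c=1$, and clauses 1 and 3 force $\be_r\ge i+1$, whence $b\ge 2$ and the triple is an inversion. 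Reassembling these verifications with the $x_1$-exponent bookkeeping above completes the proof.

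I expect the main obstacle to be organizing the numerous sub-cases cleanly rather than any single deep step: the type of a triple is not preserved under deleting or inserting columns of varying heights, so each verification splits according to which of $\al_\ell\ge\al_r$ and $\be_\ell\ge\be_r$ holds and according to which cells of the triple are $1$-cells, and in several sub-cases one must chain an inversion-triple deduction with a run of \Cref{lem.below} and a clause of $\si$-extendability. Particular care is needed for the boundary situations $\be_\ell=0$ or $\si(\ell)=1$, where a ``non-basement'' entry degenerates into the basement entry and the distinction between the two sides of the bijection must be tracked by hand.
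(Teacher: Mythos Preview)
Your proposal is correct and follows essentially the same route as the paper: delete the non-basement $1$-cells to obtain $(\be,F')$, verify via the four clauses of \Cref{def.extendable} and \Cref{lem.below} that $\be$ is $\si$-extendable and $F'\in\SSAF_\si(\be)$, then invert by appending $1$-cells and check triples case by case. The paper organizes the casework exactly as you anticipate, splitting on the four sign patterns of $(\al_\ell-\al_r,\be_\ell-\be_r)$ in both directions and handling the basement-$1$ boundary cases explicitly.
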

    \begin{proof}
        For any filling $F\in\SSAF_\si(\al)$, we produce a new filling $F'$ obtained by deleting all $1$-cells other than the basement. This yields a new filling on a weak composition $\be$ with the same basement as in the following example:
        \begin{center}
            \deleteboxes{2*\bscale}
        \end{center}
        
        We first show that every such $\be$ is $\si$-extendable to $\al$ and that $F'$ is in $\SSAF_\si(\be)$. It is clear that $\al_i\geq\be_i$ for all $i\in[n]$, so we need to check the remaining conditions of \Cref{def.extendable} for each pair of columns and verify that the relevant triples are inversion; let $1\leq\ell<r\leq n$.
        \begin{enumerate}
            \item Assume $\al_\ell\geq\al_r$ and $\be_\ell\geq\be_r$. If $\al_r>\be_\ell$, then cells $(\ell,\be_\ell+1)$ and $(r,\be_\ell+1)$ both exist in $F$ and have entry $1$, yielding a type A coinversion triple in $F$, so we have $\al_r\leq\be_\ell$. All type A triples within columns $\ell$ and $r$ in $F'$ are inversion by assumption.
            
            \item Assume $\al_\ell<\al_r$ and $\be_\ell<\be_r$. If $\al_\ell\geq\be_r$, then cells $(\ell,\be_r)$ and $(r,\be_r+1)$ both exist in $F$ and have entry $1$, yielding a type B coinversion triple in $F$, so we have $\al_\ell<\be_r$. All type B triples within columns $\ell$ and $r$ in $F'$ are inversion by assumption.
            
            \item Assume $\al_\ell\geq\al_r$ and $\be_\ell<\be_r$. If $\al_r>\be_r$, then cells $(\ell,\be_r+1)$ and $(r,\be_r+1)$ both exist in $F$ and have entry $1$, yielding a type A coinversion triple in $F$, so we have $\al_r=\be_r$. It is also true that the cell $(\ell,\be_r)$ exists in $F$ and has entry $1$, so $F(\ell,\be_r)<F(r,\be_r)$ and, by case 1a of \Cref{lem.below}, we have $F(\ell,i)<F(r,i+1)\leq F(r,i)$ for all $0\leq i\leq \be_r-1$, which implies $\si(\ell)<\si(r)$ and also that all type B triples within columns $\ell$ and $r$ in $F'$ are inversion.
            
            \item Assume $\al_\ell<\al_r$ and $\be_\ell\geq\be_r$. If $\al_\ell>\be_\ell$, then cells $(\ell,\be_\ell+1)$ and $(r,\be_\ell+2)$ both exist in $F$ and have entry $1$, yielding a type B coinversion triple in $F$, so we have $\al_\ell=\be_\ell$. It is also true that the cell $(r,\be_\ell+1)$ exists in $F$ and has entry $1$. Note that $F(\ell,\be_\ell)>1$ unless $(\ell,\be_\ell)$ is the $1$-cell in the basement, but this is not possible since then we have a type B coinversion triple in $F$. Thus, $F(\ell,\be_\ell)>F(r,\be_\ell+1)$, which entails $F(\ell,\be_\ell)>F(r,\be_\ell)$ since otherwise we have a type B coinversion triple in $F$. By case 2b of \Cref{lem.below}, we have $F(\ell,i)>F(r,i)$ for all $0\leq i\leq\be_\ell$, which implies $\si(\ell)>\si(r)$ and also that all type A triples within columns $\ell$ and $r$ in $F'$ are inversion.
        \end{enumerate}
        
        Conversely, for each $\be$ that is $\si$-extendable to $\al$, each filling $F'\in\SSAF_\si(\be)$ with no $1$-cells (other than the basement) may be extended to a filling $F$ by adding $\al_i-\be_i$ $1$-cells above each column $i\in[n]$ in $F'$. We prove $F\in\SSAF_\si(\al)$ by again considering two columns at a time; let $1\leq\ell<r\leq n$.
        \begin{enumerate}
            \item If $\al_\ell\geq\al_r$ and $\be_\ell\geq\be_r$, then $\al_r\leq\be_\ell$. We only need to confirm that type A triples containing a $1$-cell within columns $\ell$ and $r$ in $F$ are inversion. If column $\ell$ contains the $1$-entry in the basement, then $\be_\ell=0$, so $\al_r=0$, and there are no type A triples to check. Otherwise, since $\al_r\leq\be_\ell$, the $1$-cells in column $\ell$ occur at a height greater than $\al_r$ and thus are not in a type A triple within these columns. Further, all type A triples with a $1$-cell in column $r$ contain two cells in column $\ell$ with entries greater than $1$, so these triples are inversion.
            \item If $\al_\ell<\al_r$ and $\be_\ell<\be_r$, then $\al_\ell<\be_r$. We only need to confirm that type B triples containing a $1$-cell within columns $\ell$ and $r$ in $F$ are inversion. Since $\al_\ell<\be_r$, any $1$-cells in column $r$ occur at a height greater than $\al_\ell+1$ and thus are not in a type B triple within these columns. Further, all type B triples with a $1$-cell in column $\ell$ contain two cells in column $r$ which have entries greater than $1$, so these triples are inversion.
            \item If $\al_\ell\geq\al_r$ and $\be_\ell<\be_r$, then $\al_r=\be_r$ and $\si(\ell)<\si(r)$. Note that $F$ contains no $1$-cells in column $r$ since $\si(r)>1$ and $\al_r=\be_r$. Since $\si(\ell)<\si(r)$, by case 2a of \Cref{lem.below}, we have $F(\ell,i)<F(r,i+1)$ for all $0\leq i\leq\be_\ell$, yielding type A inversion triples $\{(\ell,i+1),(r,i+1),(\ell,i)\}$. The remaining type A triples in $F$ contain two adjacent $1$-cells in column $\ell$ and are inversion since column $r$ contains no $1$-cells.
            \item If $\al_\ell<\al_r$ and $\be_\ell\geq\be_r$, then $\al_\ell=\be_\ell$ and $\si(\ell)>\si(r)$. Note that $F$ contains no $1$-cells in column $\ell$ since $\si(\ell)>1$ and $\al_\ell=\be_\ell$. Since $\si(\ell)>\si(r)$, by case 1b of \Cref{lem.below}, we have $F(\ell,i)>F(r,i)$ for all $0\leq i\leq\be_r$, yielding type B inversion triples $\{(r,i+1),(\ell,i),(r,i)\}$. The remaining type B triples contain two adjacent $1$-cells in column $r$ and are inversion since column $\ell$ contains no $1$-cells.
        \end{enumerate}
        To finish the proof, note that the sum of the monomials $x^{F'}$ over all $F'\in\SSAF_\si(\be)$ where the only $1$-entry is in the basement is $\A_\be^\si(0,x_2,\ldots,x_n)$. The factor $x_1^{|\al|-|\be|}$ accounts for appending $|\al|-|\be|$ cells with entry $1$ to each of these fillings.
    \end{proof}
    Using the $\si$-extendable relation, we can define a poset on weak compositions, which is ranked by length, given by a covering relation depending on $\si$.
    \begin{definition}\label{def.poset}
        Set $\si_0:=\si$ and, for $0\leq i\leq n-1$, let
        \begin{align*}
            \si_{i+1}=(\si_i(1)-1,\ldots,\si_i(s_i-1)-1,\si_i(s_i+1)-1,\ldots,\si_i(n)-1),    
        \end{align*}
        where $s_i=\si_i^{-1}(1)$. In words, $\si_{i+1}$ is the permutation constructed from $\si_i$ by deleting the element $1$ and subtracting $1$ from the remaining elements. If $\len(\be)=\ell$, $\len(\al)=\ell+1$, $\len(\si)=n$ and $\ell<n$, we define the covering relation \itc{$\lessdot_\si$} so that $\be\lessdot_\si\al$ if $(\be_1,\ldots,\be_{s_k-1},0,\be_{s_k},\ldots,\be_\ell)$ is $\si_k$-extendable to $\al$, where $k=n-\ell-1$. Define the poset \itc{$\leq_\si$} as the transitive reflexive closure of $\lessdot_\si$. This poset is clearly ranked by length since $\be\lessdot_\si\al$ only when $\len(\be)=\len(\al)-1$.
    \end{definition}
    \begin{corollary}\label{cor.expansion}
        Atoms expand into summations over saturated chains in $\leq_\si$, given by
        \begin{align*}
            \A_\al^\si(x) = \sum_{\be_{(n)}\lessdot_\si\cdots\lessdot_\si\be_{(1)}\lessdot_\si\al}x_1^{|\al|-|\be_{(1)}|}x_2^{|\be_{(2)}|-|\be_{(1)}|}\cdots x_n^{|\be_{(n)}|-|\be_{(n-1)}|}.
        \end{align*}
    \end{corollary}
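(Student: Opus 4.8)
The plan is to prove \Cref{cor.expansion} by iterating \Cref{thm.decomposition}, inducting on $n=\len(\al)$. The base case $n=0$ is immediate: $\A_{()}^{()}(x)=1$, matching the empty chain. For the inductive step, the first job is to reinterpret the inner polynomials $\A_\ga^\si(0,x_2,\ldots,x_n)$ appearing on the right of \Cref{thm.decomposition} so that \Cref{thm.decomposition} applies to them recursively: the theorem is stated for a full set of variables, so we must strip off $x_1$ and relabel.

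The key step is the following identification. Let $s:=\si^{-1}(1)$. If $\ga$ is a weak composition of length $n$ with $\ga_s=0$ and $\be$ is the length-$(n-1)$ composition obtained by deleting the part of $\ga$ at index $s$, then $\A_\ga^\si(0,x_2,\ldots,x_n)=\A_\be^{\si_1}(x_2,\ldots,x_n)$ after the relabelling $x_{i+1}\mapsto x_i$, where $\si_1$ is the permutation of \Cref{def.poset}. I would prove this by a weight-preserving bijection on skyline fillings. The fillings $F\in\SSAF_\si(\ga)$ that survive the substitution $x_1=0$ are exactly those with no non-basement $1$-cell, and such an $F$ has an empty column $s$ (otherwise $F(s,1)\le F(s,0)=1$ forces a non-basement $1$-cell). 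Delete column $s$ and subtract $1$ from every remaining entry, both in the basement and above. The result lies in $\SSAF_{\si_1}(\be)$: the new basement is exactly $\si_1$ by construction of $\si_1$, descents are preserved, and a triple within columns $\ell<r$ with $\ell,r\neq s$ survives with its inversion status intact because $b\in[a,c]$ is invariant under a uniform shift; the only triples meeting column $s$ are type B triples $\{(r,1),(s,0),(r,0)\}$, which are automatically inversion and simply vanish. The inverse map reinstates the empty column and shifts entries up.

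With this in hand I would assemble the recurrence. Terms of \Cref{thm.decomposition} with $\ga_s\neq0$ contribute zero, since every filling of such a shape has a non-basement $1$-cell; so the sum is really over the $\ga$ with $\ga_s=0$, and by the definition of $\lessdot_\si$ (with $\ell=n-1$, $k=0$) these correspond bijectively to the $\be$ with $\be\lessdot_\si\al$. Combining with the identification lemma gives
\begin{align*}
\A_\al^\si(x)=\sum_{\be\lessdot_\si\al}x_1^{|\al|-|\be|}\A_\be^{\si_1}(x_2,\ldots,x_n).
\end{align*}
Apply the inductive hypothesis to each $\A_\be^{\si_1}$; here one checks that $\lessdot_{\si_1}$ agrees with $\lessdot_\si$ on compositions of length $\le n-1$, which holds because both covering relations invoke the same derived permutation (using $(\si_1)_m=\si_{m+1}$). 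Prepending the step $\be=\be_{(1)}\lessdot_\si\al$ to a saturated chain below $\be$ and multiplying the corresponding monomial by $x_1^{|\al|-|\be_{(1)}|}$ then produces exactly the telescoping monomial attached to each chain $\be_{(n)}\lessdot_\si\cdots\lessdot_\si\be_{(1)}\lessdot_\si\al$, and summing over all such chains gives the claimed identity.

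The main obstacle is the identification lemma: the rest is bookkeeping with the derived permutations $\si_i$ and variable relabellings, but that lemma requires genuinely verifying that column deletion is a well-defined, triple-preserving operation on skyline fillings (in particular, that no coinversion triple is created and none is silently discarded). Once it is established, the corollary follows by the clean induction above.
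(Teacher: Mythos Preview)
Your proposal is correct and follows essentially the same route as the paper: both iterate \Cref{thm.decomposition}, discard the $\ga_s\neq 0$ terms, use the identification $\A_\ga^\si(0,x_2,\ldots,x_n)=\A_{\be_{(1)}}^{\si_1}(x_2,\ldots,x_n)$, and then unwind recursively while checking that $\lessdot_{\si_1}$ and $\lessdot_\si$ agree on shorter compositions. The paper asserts the identification lemma without proof, whereas you supply the explicit column-deletion bijection and triple check, which is a welcome addition.
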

    \begin{proof}
        Consider the expansion from \Cref{thm.decomposition}:
        \begin{align*}
            \A_\al^\si(x) = \sum_\text{$\be$ $\si$-extendable to $\al$} x_1^{|\al|-|\be|}\A_\be^\si(0,x_2,\ldots,x_n).
        \end{align*}
        If $\be_{s_0}>0$, then $\be$ does not contribute to this sum, so assume $\be_{s_0}=0$ for each $\be$ summed over. Each $\be_{(1)}\lessdot_\si\al$ is obtained by deleting the part $\be_{s_0}$ from such a $\be$, so we may sum over $\be_{(1)}\lessdot\al$. Then we have that $\A_\be^\si(0,x_2,\ldots,x_n)=\A_{\be_{(1)}}^{\si_1}(x_2,\ldots,x_n)$ where $\si_1$ is obtained by deleting $1$ from $\si$ and subtracting $1$ from its remaining elements. Noting that $|\be_{(1)}|=|\be|$, we have a branching formula:
        \begin{align*}
            \A_\al^\si(x) &= \sum_{\be_{(1)}\lessdot_\si\al}x_1^{|\al|-|\be_{(1)}|}\A_{\be_{(1)}}^{\si_1}(x_2,\ldots,x_n).
        \end{align*}
        Recursively expanding our branching formula, we have
        \begin{align*}
            \A_\al^\si(x) &= \sum_{\be_{(n)}\lessdot_{\si_{n-1}}\cdots\lessdot_{\si_1}\be_{(1)}\lessdot_\si\al}x_1^{|\al|-|\be_{(1)}|}x_2^{|\be_{(2)}|-|\be_{(1)}|}\cdots x_n^{|\be_{(n)}|-|\be_{(n-1)}|} \\
            &= \sum_{\be_{(n)}\lessdot_\si\cdots\lessdot_\si\be_{(1)}\lessdot_\si\al}x_1^{|\al|-|\be_{(1)}|}x_2^{|\be_{(2)}|-|\be_{(1)}|}\cdots x_n^{|\be_{(n)}|-|\be_{(n-1)}|}.
        \end{align*}
        The final equivalence is possible by our definitions which ensure that if $k<\len(\si)-\len(\be)$, then $\be\lessdot_{\si_k}\al$ if and only if $\be\lessdot_\si\al$.
    \end{proof}
    \section{Equivalence of models and bijection with skyline fillings}\label{sec.bijection}
    We show that the atom model satisfies \Cref{cor.expansion}, which also gives a clear bijection with skyline fillings. We call a string of non-negative integers a \itc{descending string} if it does not contain the substring $ij$ for two positive integers $i<j$. Note that, by our encoding procedure, $\al^*$ is descending.
    
    \begin{lemma}\label{lem.descending}
        Within a tiling of the atom model, verticals between columns encode descending strings (moving up).
    \end{lemma}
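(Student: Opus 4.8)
The plan is to argue by induction on the columns, from right to left, that each vertical is a descending string read upward. Moving up a fixed column, the tiles force a single \emph{active colour} (possibly $0$) to thread it, and an inspection of the six tiles shows that this colour can only stay fixed, drop to $0$ (perhaps restarting higher up with a fresh colour arriving from the right), or be replaced by a weakly smaller colour — strictly smaller only across the two-diagonal tile, which carries the constraint $i\le j$. An entry of the vertical $v$ separating columns $c-1$ and $c$ is exactly the colour crossing $v$ at that height, or $0$, so two consecutive positive entries of $v$ come from a vertically-adjacent pair of tiles in column $c$. There are only finitely many compatible such pairs, and I would check each: in every pair except one, the lower tile forces the upper tile to contribute a weakly smaller colour to $v$, either directly from $i\le j$ or from the inductive hypothesis that the vertical to the right of column $c$ is descending. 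The base case, the rightmost vertical, is all $0$'s and hence descending.

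The one pair that does not close this way is when the tile carrying two colours $f,g$ with $\si(f)<\si(g)$ (colour $g$ crossing horizontally, the active colour $f$ running vertically) sits directly below a tile in which $f$ turns and leaves the column to the left; here the descending condition asks for $g\ge f$, which $\si(f)<\si(g)$ alone does not give. For this I would argue by contradiction using the global geometry: colour strands only ever travel up and to the left, so each colour $a$ occupies precisely the columns $1,\dots,\si(a)$ and meets every vertical at most once. Suppose some vertical carried a consecutive pair $a$ below $b$ with $0<a<b$; take such a configuration extremal (say, lowest and leftmost). Follow $a$ and $b$ out of that column: as long as both pass straight through successive columns, the inverted pair persists and eventually lands on the boundary string $\al^*$, contradicting that $\al^*$ is descending. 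Otherwise one of the two colours turns and becomes the active colour of some column, and then a short check of which tiles may sit at the relevant two heights shows the other colour has \emph{no} legal tile through which to pass (the two candidates, ``the $f,g$-tile again'' and ``the two-diagonal tile absorbing it'', being excluded respectively by $\si(f)<\si(g)$ and by $a<b$), or else the inverted pair migrates to a strictly smaller configuration — in both sub-cases contradicting extremality. Either way no vertical carries an inverted consecutive pair.

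I expect this last step to be the crux. The routine part is the finite case-check over the six tiles; the difficulty is that exactly one of those cases is not local, forcing one to combine the colour-flow geometry of the whole lattice with the descendingness of $\al^*$. This non-locality is the same feature that later restricts the Column \Cref{lem.column} to special boundaries, so pinning down the bookkeeping for this one tile configuration is where the real work lies.
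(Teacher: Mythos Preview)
Your local analysis is sound: the case-check over adjacent tile pairs in a column, using the inductive hypothesis on the vertical to the right, correctly isolates the single problematic configuration (lower tile a crossing $\{g,f,f,g\}$ with $\si(f)<\si(g)$, upper tile sending $f$ out to the left). But the global argument you give to dispatch this case has a real gap.

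You take the bad pair $(a,b)$ with $a<b$ to be ``lowest and leftmost'' and trace \emph{left}. One step left, extremality rules out the cases where the pair propagates, so the only surviving possibility is that $a$ turns vertical and $b$ must pass through the tile above it (bottom $=a$, right $=b$). You then assert this tile's two candidates are excluded: the two-diagonal tile by $a<b$ (correct), and the crossing tile ``by $\si(f)<\si(g)$''. But the crossing tile here is $\{b,a,a,b\}$, which requires $\si(a)<\si(b)$ --- this is a \emph{condition}, not an obstruction. You are implicitly assuming $\si(a)>\si(b)$, which is exactly what your earlier induction step delivers (there $a=g$, $b=f$), but that step relied on the vertical to the \emph{right} being descending. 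For the leftmost bad pair you have no control over the vertical to its right, so $\si(a)<\si(b)$ is entirely possible, the crossing tile is legal, and lines $a,b$ simply cross once. That is not a contradiction.

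The paper closes this by tracing the bad pair in \emph{both} directions. Going left, the pair eventually forces a crossing of lines $i$ and $j$; going right, the symmetric analysis forces a second crossing of the same two lines. Since two coloured strands can cross at most once, this is the contradiction. Your one-sided trace can in fact be repaired, but you must take the \emph{rightmost} bad pair: inspecting the column to its right then forces the lower tile there to be a crossing with $f=b$, giving $\si(b)<\si(a)$, and now tracing left the crossing tile really is unavailable at every step, so the pair propagates all the way to $\al^*$. The direction of the extremality and the direction of the trace have to be opposite.
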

    \begin{proof}
        Descending strings label the left and right boundaries. Assume that there is not a descending string between two columns, so there is a position in this vertical where colour $i$ is directly below $j$ and $i<j$. Set $i_0:=i$ and $j_0:=j$.
        
        We consider tiling to the left in case 1 and tiling to the right in case 2, both situations having four subcases:
        \begin{center}
            \robust{\ascale}
        \end{center}
        Case 1a transmits $i$ and $j_k$ to the left in the same order. In 1b, we must have $j_{k+1}>j_k>i$. A sequence of cases 1a and 1b then transmits some non-descending string containing $ij_k$ for $i<j_k$ to the left, which cannot continue perpetually since the left boundary is descending. The highlighted tile in $1c$ is not possible since we maintain $i<j_k$. If 1d occurs after a case of 1b, then line $i$ crosses the same line $j_k$ twice, which is not possible. Thus, case 1d must occur, possibly after some cases of 1a, and line $i=i_0$ crosses line $j=j_0$.
        
        A similar argument forces lines $i$ and $j$ to cross moving to the right, but then $i$ and $j$ cross twice, which is not possible. Thus, intermediate verticals are also descending strings.
    \end{proof}
    
    We show that the first column in the atom model essentially sums over all $\be_{(1)}\lessdot_\si\al$, satisfying \Cref{cor.expansion}, thus yielding the same polynomials as in the skyline formulation. Note that \Cref{lem.descending} shows that we may assume the right side of the first column is descending. This side will encode a weak composition $\be$ where $\be_s=0$ with a string similar to $\be^*$, only skipping over the character $s=\si^{-1}(1)$. Precisely, we call this string $\be^{(s)}$, which is obtained by deleting the character $s$ from $\be^*$ and appending $0$ to the end.
    
    \begin{example}
        Let $\al=(3,2,0,2)$ and $\si=4123$, so that $\si^{-1}=2341$ and $s=\si^{-1}(1)=2$. There are two ways to tile column $1$ so that the right side is a descending string. These strings encode weak compositions $\be=(2,0,0,2)$ and $\de=(3,0,0,2)$. We compute the boundary strings:
        \begin{align*}
            \al^* = 3004201 && \be^* = 3200410 && \be^{(2)} = 3004100 \\
            && \de^* = 3200401 && \de^{(2)} = 3004010
        \end{align*}
        \begin{center}
            \firstcolumnex{\bscale}
        \end{center}
    \end{example}
    \pagebreak
    \begin{lemma}\label{lem.columnsum}
        The first column in the atom model is possible to tile if and only if $\be^{(s)}$ labels its right side, where $s=\si^{-1}(1)$ and $\be$ is $\si$-extendable to $\al$.
    \end{lemma}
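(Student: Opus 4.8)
The plan is to analyze a single column of the atom model directly, decoding how a legal tiling of column $1$ transforms the left string $\al^*$ into a right string, and to match the resulting condition on the right string with the definition of $\si$-extendability. Concretely, I would first set up notation: the left boundary of column $1$ is labelled $\al^*$, the bottom is labelled $\si^{-1}$, the top is labelled $0$, and column $1$ has width one tile. By \Cref{lem.descending} we may assume the right vertical is a descending string; call it $\be^{(s)}$ where $\be$ is the weak composition it encodes (with $\be_s$ skipped, $s=\si^{-1}(1)$). The content of the lemma is that such a tiling exists precisely when $\be$ is $\si$-extendable to $\al$.

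For the forward direction, I would read off the constraints the tiles impose. Column $1$ contains exactly one ``nontrivial'' tile carrying the weight $x_1$ (since the model produces monomials and $x_1$ appears to the power $|\al|-|\be|$ summed appropriately over the other columns — but within column $1$ the relevant bookkeeping is the colour/red-line crossing structure). Going tile by tile up the column, each tile is one of the types in \demazuretiles; the ``red'' line entering from the bottom at the basement position $\si^{-1}$ and the coloured lines entering from the left at positions dictated by $\al^*$ must exit on the right in a configuration encoding $\be^{(s)}$, with the extra constraints that all non-$\si$-extendable crossings are forbidden. The key observation is that the conditions 1--4 of \Cref{def.extendable} are exactly the local legality conditions for which pairs of colours $(\ell, r)$ are allowed to cross or not cross within the single column, reformulated in terms of the heights $\al_\ell, \al_r$ (governing the left boundary) and $\be_\ell, \be_r$ (governing the right boundary); the permutation conditions $\si(\ell) < \si(r)$ resp. $\si(\ell) > \si(r)$ arise because the basement entry $\si^{-1}$ determines in which order the colours are inserted at the bottom, i.e., whether colour $\ell$'s line or colour $r$'s line is ``inside'' near the basement. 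I would make this precise by the same four-case analysis as in the proof of \Cref{thm.decomposition} (cases split on whether $\al_\ell \geq \al_r$ and whether $\be_\ell \geq \be_r$), showing in each case that a legal tiling forces the corresponding inequality among $\al_r, \be_\ell$ (etc.) and the corresponding permutation condition, using \Cref{lem.below}-style reasoning transported to the vertex model via the bijection that is being set up. In fact, I expect the cleanest route is to \emph{combine} this with \Cref{thm.decomposition}: since the skyline side already has the decomposition $\A_\al^\si(x) = \sum_{\be\ \si\text{-ext.\ to }\al} x_1^{|\al|-|\be|}\A_\be^\si(0,x_2,\dots,x_n)$ and the right boundary of column $1$ precisely records which summand we are in, the column-$1$ tilings that yield right string $\be^{(s)}$ must be in bijection with the terms indexed by that $\be$.

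For the converse, given $\be$ that is $\si$-extendable to $\al$, I would explicitly construct a legal tiling of column $1$ with right string $\be^{(s)}$: place the nontrivial ($x_1$-weighted) tile so that it sits at the unique height where the red basement line turns, and route the coloured lines using the extendability conditions to guarantee that no forbidden tile (such as the highlighted tile in case $1c$ of \robust, or the banned configurations) appears. Each of the four cases of \Cref{def.extendable} tells us how lines $\ell$ and $r$ interact and that the interaction is realizable by trivial tiles; the overall consistency — that the lines can be routed simultaneously for all pairs — follows because the descending-string structure (\Cref{lem.descending}) together with the ranking by length makes the routing essentially forced.

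The main obstacle I anticipate is the bookkeeping that translates ``heights $\al_i$, $\be_i$ and the basement order $\si^{-1}$'' into ``which tile appears at which lattice position,'' in particular correctly locating the single nontrivial tile and verifying that the permutation conditions 3 and 4 (with their asymmetric $\si(\ell) < \si(r)$ vs.\ $\si(\ell) > \si(r)$) come out with the right orientation rather than reversed. I would pin this down by checking the smallest nontrivial example ($n=2$, all four patterns of $\al$ vs.\ $\be$) against \Cref{ex.models}-style computations, and then by invoking \Cref{lem.below} exactly as in \Cref{thm.decomposition} to upgrade the local checks to the full column. Once the column-$1$ statement is established, iterating it column by column (peeling off $\si_k$ as in \Cref{def.poset}) is what will ultimately give the bijection with skyline fillings and the agreement with \Cref{cor.expansion}; but that iteration is outside the scope of this particular lemma.
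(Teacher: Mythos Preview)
Your proposal has a structural circularity and some factual confusions about the model that would prevent the argument from going through as written.

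First, the circularity. You suggest that ``the cleanest route is to combine this with \Cref{thm.decomposition}'' and elsewhere that one should use ``\Cref{lem.below}-style reasoning transported to the vertex model via the bijection that is being set up.'' But \Cref{lem.columnsum} is precisely the engine that proves the vertex model agrees with the skyline definition (via \Cref{thm.equal}); you cannot appeal to \Cref{thm.decomposition} or to the bijection with SSAFs to establish it, because at this point in the logic the vertex model is not yet known to compute $\A_\al^\si(x)$. The paper proves \Cref{lem.columnsum} entirely internally to the vertex model, reading off the four $\si$-extendability conditions directly from how lines must route in a single column.

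Second, two misreadings of the model. There are no red lines in the atom model: the tiles in \Cref{sec.models} involve only coloured (blue) lines, with the bottom edge of column~$1$ carrying the single colour $s=\si^{-1}(1)$. Red lines appear only later, in the atom--Schur diamond model. And column~$1$ does not contain exactly one nontrivial tile; it contains $|\al|-|\be|$ of them, one for each step at which the line of colour~$s$ moves vertically past a $0$ on the left. So the converse cannot be phrased as ``place the nontrivial tile at the unique height where the red basement line turns.''

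Third, and most importantly, your converse is too vague to constitute a proof. ``Route the coloured lines using the extendability conditions'' hides the entire difficulty: one must show that, tiling from the bottom up, every triple of labels $(a,b,c)$ on the left, right, and bottom of the next tile is realizable by some tile in the list, \emph{and} that the problematic patterns (e.g.\ a left label~$a>0$ with bottom label~$0$, forcing the nonexistent configuration $\al_a<\be_a$) never arise. The paper does this by tracking a ``zero condition'' relating the number of $0$'s seen so far on the left and right boundaries, and then running through fifteen cases for $(a,b,c)$, using each clause of \Cref{def.extendable} to rule out the nine impossible ones and to verify the crossing/ordering constraints in the six legal ones. That case analysis is the real content of the converse, and nothing in your outline substitutes for it.
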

    \begin{proof}
        We have argued above that we may assume the right side of the first column always encodes a weak composition $\be$, where $\be_s=0$, with the string $\be^{(s)}$. We must show that it is possible to tile the model if and only if $\be$ is $\si$-extendable to $\al$. Note that both strings $\be^*$ and $\be^{(s)}$ will have $\be_i$ instances of the character $0$ before the character $i$, determining the size of part $i$. Suppose it is possible to tile the first column and let $1\leq\ell<r\leq n$.
        \begin{enumerate}
            \item If $\al_\ell\geq\al_r$ and $\be_\ell\geq\be_r$, then the label $r$ occurs below the label $\ell$ on the left side. If $\ell=s$, then $\be_\ell=\be_r=0$ and line $\ell$ begins on the bottom side of the column and crosses line $r$. Then $\al_r=\be_r=0$ and thus $\al_r\leq\be_\ell$. Otherwise, lines $\ell$ and $r$ do not cross. Let $i\leq r$ be the smallest integer where $\al_i=\al_r$ and $j\geq\ell$ be the largest integer where $\be_j=\be_\ell$. If line $i$ exits on the left at height $h$, then line $j$ must exit on the right at height at least $h$, which implies $\al_r\leq\be_\ell$.
            \item If $\al_\ell<\al_r$ and $\be_\ell<\be_r$, then the label $\ell$ occurs below $r$ on both sides of the column and the lines do not cross. Let $i\leq\ell$ be the smallest integer where $\al_i=\al_\ell$ and $j\geq r$ be the largest integer where $\be_j=\be_r$. If line $i$ exits on the left at height $h$, then line $j$ must enter on the right at a height $h'\geq h+1$ and there must be a $0$ between height $h$ and $h'$ on the right, which implies $\al_\ell<\be_r$.
            \item If $\al_\ell\geq \al_r$ and $\be_\ell<\be_r$, we have $r\neq s$ since $\be_r>0$; thus, line $r$ must cross horizontally over line $\ell$, so $\al_r=\be_r$ and $\si(\ell)<\si(r)$. 
            \item If $\al_\ell<\al_r$ and $\be_\ell\geq\be_r$, suppose $\ell=s$; then $\be_r=0$. Line $\ell$ must cross vertically under all lines $i\geq\ell$ where $\al_i=0$, so $\al_r\leq\al_\ell$, contradicting our assumptions. Otherwise, line $\ell$ must cross horizontally over line $r$, so $\al_\ell=\be_\ell$ and $\si(\ell)>\si(r)$.
        \end{enumerate}
        
        Conversely, suppose $\be$ is $\si$-extendable to $\al$ and $\be_s=0$. Label the left of column $1$ with $\al^*$ and the right with $\be^{(s)}$. We show that it is always possible to tile this column from the bottom up, considering one tile at a time. The left, right and bottom edges of the tile will have labels $a,b,c$:
        \begin{center}
            \singletile{\ascale}
        \end{center}
        Let the current tile be at height $h$. Let $L(h)$ be the number of $0$'s below height $h$ on the left side of the column and let $R(h)$ be the number of $0$'s below height $h$ on the right side. If $a$ and $b$ are positive integers, we have $\al_a=L(h)$ and $\be_b=R(h)$ respectively. We say the \itc{zero condition} is satisfied at height $h$ when
        \begin{align*}
            L(h) =
            \begin{cases}
                R(h)-1 &\text{if $c=0$} \\
                R(h) &\text{if $c>0$.}
            \end{cases}
        \end{align*}
        The zero condition is satisfied when $h=1$, where $c=s>0$ and $L(h)=R(h)=0$. Let $a$, $b$ and $c$ be distinct positive integers. There are 15 cases for the tile at height $h$:
        \begin{center}
            \tilecases{\ascale}
        \end{center}
        We have emphasized problematic clusters of labels in bold red font. We want to show that only cases 1-6 are possible and a tile may always be placed in these cases; this will also ensure we satisfy the zero condition at height $h+1$, so by induction we can tile the entire column.
        
        \begin{enumerate}
            \item[1-4.] It is always possible to place a tile in these cases.
            \item[5.] We must show that $b<c$ to have a valid tile; suppose $c<b$. Then line $b$ exits on the left at a height above $h$ and, since $\al^*$ is a descending string, we must have $\al_c<\al_b$. We must also have that $\be_c\leq\be_b$; if $c=s$ and $\be_c=\be_b$, we have that $\si(c)>\si(b)$ by case 4 of \Cref{def.extendable}, which is not possible since $\si(c)=1$. Otherwise, $c=s$ and $\be_c<\be_b$ or $c\neq s$, and we still have $\be_c<\be_b$ since $\be^{(s)}$ is a descending string. By case 2 of \Cref{def.extendable}, we have that $\al_c<\be_b$, but $\al_c=L(h)=R(h)=\be_b$; hence, $b<c$.
            \item[6.] For the crossing to occur, we must show $\si(c)<\si(b)$. If $c=s$, then $\si(c)=1<\si(b)$, as desired. Othewise, if $c<b$, then $\al_c\geq\al_b$ and $\be_c<\be_b$, so $\si(c)<\si(b)$ by case 3 of \Cref{def.extendable}. If $b<c$, then $\al_b<\al_c$ and $\be_b\geq\be_c$, so $\si(b)>\si(c)$ by case 4 of \Cref{def.extendable}.
            \item[7-9.] These cases cannot occur since then line $a$ enters on the right at height at least $h$; since $L(h)<R(h)$, we have $\al_a<\be_a$.
            \item[10-12.] These cases cannot occur since then line $c$ enters the column on the bottom or right twice, which is not possible.
            \item[13.] Since we assume $\al_a=L(h)=R(h)$ and the right label is $0$, this case implies $\al_a<\be_a$, which is not possible.
            \item[14.] We have that $\al_b$ and $\al_c$ are greater than both $\be_b$ and $\be_c$ because of the $0$ on the left. Thus, we cannot satisfy \Cref{def.extendable} since each case implies $\al_d\leq\be_e$ for at least one pair $d,e\in\{b,c\}$.
            \item[15.] This case implies line $a$ must enter on the right side at a height above $h$. This is only possible if $\be_a=\be_b$ and $a<b$, otherwise $\al_a<\be_a$. On the left side, labels $b$ and $c$ must be above $a$, so we have $\al_b\geq\al_a$ and $\al_c\geq\al_a$.
            
            We cannot have that $\al_b=\al_a$ since $a<b$, so $\al_a<\al_b$. If $\al_c=\al_a$, then $c<a<b$ and $\al_c<\al_b$; note that $\be_c\leq\be_b$. If $\be_c<\be_b$, then by case 2 of \Cref{def.extendable}, we have $\al_c<\be_b$, meaning $c$ must have exited at a previous step. If $\al_c=\al_b$, we are in case 4 of \Cref{def.extendable} and $\si(c)>\si(b)$, meaning it is not possible that $c=s$; otherwise, $\al_c=\be_c$, meaning $\be_c=\be_b$, which is not possible since $\be^{(s)}$ is a descending string.
            
            Thus, $\al_b>\al_a$ and $\al_c>\al_a$, implying that $\al_b$ and $\al_c$ are both greater than $\be_b$ and $\be_c$, and we cannot satisfy \Cref{def.extendable} for the same reason as in case 14.
        \end{enumerate}
    \end{proof}
    We now justify that the atom model yields the same polynomials as those given in \Cref{sec.skyline}.
    \begin{theorem}\label{thm.equal}
        The polynomials given by the atom model satisfy \Cref{cor.expansion}.
    \end{theorem}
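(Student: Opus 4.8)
The plan is to prove the theorem by induction on $n$, peeling off the first column of the atom model and showing that this reproduces the one‑step branching formula
\[
\A_\al^\si(x)=\sum_{\be_{(1)}\lessdot_\si\al}x_1^{|\al|-|\be_{(1)}|}\A_{\be_{(1)}}^{\si_1}(x_2,\ldots,x_n)
\]
that was extracted from \Cref{thm.decomposition} in the proof of \Cref{cor.expansion}; here $\si_1$ is obtained from $\si$ by deleting $1$ and decrementing the remaining entries, and $\be_{(1)}$ ranges over weak compositions of length $n-1$. Once the atom model is known to satisfy this recurrence, \Cref{cor.expansion} follows by the same unrolling carried out at the end of the proof of \Cref{cor.expansion} — in particular using that $\be\lessdot_{\si_k}\al$ and $\be\lessdot_\si\al$ agree in the relevant range of $k$ — so it suffices to establish the displayed identity for the partition function of the atom model. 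The base case $n\le 1$ is immediate, since there both sides equal $x_1^{\al_1}$ (or $1$ when $n=0$).

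For the recurrence, fix a tiling of the atom model for $(\al,\si)$. By \Cref{lem.descending} the vertical strip separating column $1$ from column $2$ carries a descending string, so it encodes $\be^{(s)}$ for a unique weak composition $\be$ with $\be_s=0$, where $s=\si^{-1}(1)$. By \Cref{lem.columnsum} a tiling of column $1$ with this right side exists exactly when $\be$ is $\si$-extendable to $\al$; reading \Cref{def.poset} at the top rank ($k=0$, so $\si_0=\si$ and $s_0=s$) this is precisely the condition $\be_{(1)}\lessdot_\si\al$, where $\be_{(1)}$ is $\be$ with its zero part at position $s$ deleted. The bottom‑up construction in the proof of \Cref{lem.columnsum} moreover shows that, given its left, bottom and right boundaries, the tiling of column $1$ is \emph{forced}. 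Following the colour lines through this forced tiling — equivalently, matching its nontrivial tiles with the non‑basement $1$-cells removed in the proof of \Cref{thm.decomposition} — shows that it contains exactly $|\al|-|\be|=|\al|-|\be_{(1)}|$ nontrivial tiles, so the product of its tile weights is $x_1^{|\al|-|\be_{(1)}|}$.

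It then remains to identify the partition function of columns $2,\ldots,n$ — whose left boundary is now $\be^{(s)}$, whose bottom boundary is the string $\si^{-1}$ with its first entry (the colour $s$, which is consumed inside column $1$) removed, and whose remaining boundaries are all $0$ — with the atom model for $(\be_{(1)},\si_1)$ in the variables $x_2,\ldots,x_n$. Concretely I would relabel the surviving colours $[n]\setminus\{s\}$ onto $\{1,\ldots,n-1\}$ order‑preservingly and check that this turns $\be^{(s)}$ into the left‑boundary string the smaller model uses to encode $\be_{(1)}$ (this is where the trailing $0$ appended in the definition of $\be^{(s)}$ is absorbed), turns the truncated bottom string into $\si_1^{-1}$, and turns the tile set into the one for $\si_1$ — the last point because $\si$ enters the tiles only through the relative order of colours, which $\si_1$ preserves. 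Granting this, columns $2,\ldots,n$ form the atom model for $(\be_{(1)},\si_1)$, with tile weight $x_{c+1}$ in its $c$-th column; summing over all admissible $\be$ and applying the inductive hypothesis to each sub‑model yields the branching formula, completing the proof. I expect the main obstacle to be exactly this identification: verifying that the re‑encoding $\be^{(s)}$, the colour relabelling, and the smaller model's encoding of $\be_{(1)}$ are all compatible with the tiles, so that legal tilings of the truncated lattice correspond bijectively and weight‑preservingly to legal tilings of the $(n-1)$-column atom model. The colour‑line count behind the exponent of $x_1$ is routine but should be written out in full.
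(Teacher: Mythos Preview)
Your proposal is correct and follows essentially the same route as the paper: peel off column~1, invoke \Cref{lem.descending} and \Cref{lem.columnsum} to identify the admissible right boundaries with the $\be$ that are $\si$-extendable to $\al$ (equivalently $\be_{(1)}\lessdot_\si\al$), observe that the forced column-1 tiling has weight $x_1^{|\al|-|\be_{(1)}|}$, and then relabel colours $i>s\mapsto i-1$ to recognize columns $2,\ldots,n$ as the atom model for $(\be_{(1)},\si_1)$ in $x_2,\ldots,x_n$. The identification you flag as the main obstacle is exactly the one the paper dispatches in a sentence (``The left boundary will then be $\be_{(1)}^*$ and the bottom boundary will be $\si_1^{-1}$''), and your order-preserving relabelling is the same map; the crossing condition transfers because $\si_1$ is defined so that $\si_1$ on the relabelled colours reproduces $\si$ up to a global shift by~$-1$.
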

    \begin{proof}
        By \Cref{lem.columnsum}, we can consider a sum over $\be$ that are $\si$-extendable to $\al$, where $\be_s=0$, so we may instead sum over all $\be_{(1)}\lessdot_\si\al$, where $\be_{(1)}=(\be_1,\ldots,\be_{s-1},\be_{s+1},\ldots,\be_n)$. For each $\be_{(1)}$, the first column will have weight $x_1^{|\al|-|\be_{(1)}|}$.
        
        Considering the columns $2,\ldots,n$ separately, with $\be^{(s)}$ along the left boundary, we set each colour $i>s$ equal to $i-1$. The left boundary will then be $\be_{(1)}^*$ and the bottom boundary will be $\si_1^{-1}$, where $\si_1$ is as in \Cref{def.poset}. If we let $\si_1$ determine the crossing tiles instead of $\si$, this is the model for $\A_{\be_{(1)}}^{\si_1}(x_2,\ldots,x_n)$. The tilings of this model are identical to the columns $2,\ldots,n$ in the original picture, only with the colours $i>s$ decreased by $1$. Thus, we have the same branching formula as in \Cref{cor.expansion} and can expand the polynomials as before:
        \begin{align*}
            \A_\al^\si(x) &= \sum_{\be_{(1)}\lessdot_\si\al}x_1^{|\al|-|\be_{(1)}|}\A_{\be_{(1)}}^{\si_1}(x_2,\ldots,x_n) \\
            &= \sum_{\be_{(n)}\lessdot_\si\cdots\lessdot_\si\be_{(1)}\lessdot_\si\al}x_1^{|\al|-|\be_{(1)}|}x_2^{|\be_{(2)}|-|\be_{(1)}|}\cdots x_n^{|\be_{(n)}|-|\be_{(n-1)}|}.
        \end{align*}
    \end{proof}
    Similar to \cite{BW22}, we may obtain the SSAF corresponding to a tiling by simply following line $i$ to produce the entries in column $i$. Begin with a skyline diagram of shape $\al$ with basement $\si$ and other entries blank. For each line labelled $i$ in a tiling, follow line $i$ from the bottom, moving up. Each time line $i$ moves through a nontrivial tile within a column $c$, assign the entry $c$ to the bottom-most blank entry of column $i$ in the skyline diagram.

    \begin{example}
        We show the corresponding SSAFs for the tilings in \Cref{ex.models}, where $\al=(1,0,2,2)$ and $\si=4123$, so $\si^{-1}=2341$ and $\al^*=201043$. \textit{Note that columns in SSAFs are numbered $1$ to $n$ from left to right. Columns are not numbered by their basement entries.}
        \begin{center}
            \bijectionex{\cscale}
        \end{center}
    \end{example}
    
    \section{Vertex models for $a_{\al\la}^\be(\si)$}\label{sec.theorem}
    We present our vertex models for the structure coefficients $a_{\al\la}^\be(\si)$. Our models use the ``diamond'' tiles below where $f,g,h,i$ are in $[n]$, $\si(f)<\si(g)$ and $\si(f)<\si(h)$. The label $b^+$ indicates a blue line $b$ and a red line share that edge. The label $ab$ indicates two shades of blue $a$ and $b$ with $\si(a)<\si(b)$ share that edge. All tiles have weight $1$.
    \begin{center}
        \diamonds{\ascale}
    \end{center}
    In addition to having labels match, we must also ban certain adjacencies not captured by the labels, as depicted in \Cref{fig.banned}. We do not allow a line of colour $i$ to move diagonally directly below a line of colour $j$ if $i<j$. These restrictions are still local and can be imposed with additional labels (see \Cref{app.labels}), but we exclude them to avoid clutter. We now state our theorem.
    
    \begin{theorem}\label{thm.diamonds}
        The structure coefficients $a_{\al\la}^\be(\si)$ are calculated as the partition function of the model
        \begin{center}
            \producttheorem{\bscale}
        \end{center}
        where $k=\max(\be)$ and the restrictions in \Cref{fig.banned} apply within. The symbol $\rcirc^k\wcirc^n$ represents the string $+^k0^n$.
    \end{theorem}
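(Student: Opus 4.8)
The plan is to prove the theorem by constructing a ``prism'' vertex model that glues the atom model for $\A_\al^\si(x)$ to the Schur model for $s_\la(x)$ along a common face, and then evaluating its partition function in two different ways. First I would assemble the prism from the diamond tiles together with the auxiliary shear tiles, arranging its exterior boundaries to be labelled by $\al^*$, $\la^+$, $\si^{-1}$, $\bacirc^n$ (all $1$'s), $\rcirc^k\wcirc^n$, and $\wcirc^k\bacirc^n$. Because the atom tiles and the Schur tiles live in a single common framework — the atom model being a ``coloured'' version of the Schur model, as in \Cref{rem.BBBG} — every tiling of the prism in its ``straight'' form restricts to an atom-model tiling and a Schur-model tiling whose weights multiply, so the partition function of the straight prism is exactly $\A_\al^\si(x)\,s_\la(x)$.

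Next I would re-evaluate the same prism in its ``sheared'' form, in which the Schur-type lines have been pushed past the atom-type lines. Fixing the intermediate boundary string of the sheared prism to encode a weak composition $\be$, each such tiling factors into a core built entirely from the diamond tiles — whose four boundaries are precisely $\al^*$, $\la^+$, $\be^*$, and $\rcirc^k\wcirc^n$, which is exactly the model in the statement — glued to an atom-model tiling of weight $\A_\be^\si(x)$. Summing over $\be$, the sheared prism has partition function $\sum_\be a_{\al\la}^\be(\si)\,\A_\be^\si(x)$, where $a_{\al\la}^\be(\si)$ is declared to be the partition function of the diamond model. Hence the theorem reduces to the assertion that the straight and sheared prisms have equal partition functions; combined with the linear independence of the atoms $\A_\be^\si(x)$, this identifies $a_{\al\la}^\be(\si)$ with the structure coefficients in $\A_\al^\si(x)\,s_\la(x)=\sum_\be a_{\al\la}^\be(\si)\,\A_\be^\si(x)$.

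The bridge between the two forms is the Column \Cref{lem.column}, a restricted Yang--Baxter-type identity that moves a single column of tiles from one side of the prism to the other without changing the partition function. I would apply it repeatedly, column by column, to deform the straight prism into the sheared prism; the composite of these local moves carries the configuration computing $\A_\al^\si(x)\,s_\la(x)$ to the one computing $\sum_\be a_{\al\la}^\be(\si)\,\A_\be^\si(x)$. This finishes the proof once \Cref{lem.column} is available, whose own proof is deferred to \Cref{sec.colproof}.

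The main obstacle, already flagged in the introduction, is that \Cref{lem.column} is \emph{not} a Yang--Baxter equation valid for arbitrary boundary conditions: it holds only for certain boundaries of the column being slid, so each application must be justified. The crucial point will be that \Cref{lem.descending} — verticals encode descending strings — together with the special structure of the prism's outer faces ($\al^*$ descending; the pure blocks $+^k$ and $0^n$ comprising $\rcirc^k\wcirc^n$) and the banned adjacencies of \Cref{fig.banned}, forces every column met during the sliding procedure to satisfy the hypotheses of the Column Lemma. Verifying this compatibility at the interface between the coloured (atom) part and the uncoloured (Schur) part of the prism, and preserving it as the shear propagates across the prism, is where the genuine work lies.
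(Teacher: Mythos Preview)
Your approach is the same as the paper's: build the six-sided prism, evaluate it two ways, and bridge the two forms by iterating the Column \Cref{lem.column}. Two points in your sketch need sharpening.

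First, the straight prism is not literally the atom model glued to the Schur model; it has three additional regions (a left-sheared strip below, a diamond region in the middle, and a left-sheared strip above) that must each be shown to be uniquely tileable with weight $1$. In the paper this is where, for instance, one checks that the red lines pass rigidly through the lower strip and the diamond region so that the string $\la^+$ on the southwest face is converted to $\la^-$ on the interface with the Schur region (the paper cites Lemma~10 of \cite{Z09} for the diamond region). The analogous forced regions appear on the sheared side as well, and only after peeling them off does the remaining configuration factor as (atom-Schur diamond model) $\times$ $\A_\be^\si(x)$. Your sentence ``every tiling of the prism in its straight form restricts to an atom-model tiling and a Schur-model tiling whose weights multiply'' hides exactly this work.

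Second, your justification for why the Column Lemma's hypotheses are met at every step is over-engineered. You do not need \Cref{lem.descending} or the banned adjacencies here: the restriction in \Cref{lem.column} is only that the labels $r$ and $u$ at the southwest and northeast corners of the column lie in $\{0,+\}$, and this is guaranteed directly by the outer boundary strings $\la^+$ and $\rcirc^k\wcirc^n$, which consist entirely of $0$'s and $+$'s. As the shear propagates, each new column's $r$ and $u$ edges remain on these outer faces, so the hypothesis persists automatically.
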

    \begin{proof}
        We reserve this proof for \Cref{sec.proof}.
    \end{proof}
    
    \begin{figure}
        \stepcounter{theorem}
        \centering
        \banned{\ascale}
        \caption{Banned adjacencies where $1\leq i<j\leq n$.}\label{fig.banned}
    \end{figure}
    
    We call our vertex model for $a_{\al\la}^\be(\si)$ the \itc{atom-Schur model}. Recall that we assume $\al$, $\be$ and $\la$ have length $n$, but we may append zeros to match their lengths. Similarly, we may append zeros to the end of the string $\al^*$ and append $+$ symbols to the end of $\la^+$ so that these strings all have length $n+k$.
    
    \begin{remark}
        The atom-Schur model may be cut in half because the top half is always fixed, giving a triangular puzzle rule as in \cite{KZ20,KZ21,KZ23,WZ19,Z09}.
    \end{remark}
    
    \begin{example}
        For $\al=(1,3,1,0)$, $\la=(3,1,0,0)$ and $\be=(1,4,3,1)$, we have $k=\max(\be)=4$. If $\si(1)<\si(3)$, there are two tilings, showing $a_{\al\la}^\be(\si)=2$, otherwise line $3$ may not cross horizontally over line $1$ and $a_{\al\la}^\be(\si)=0$.
        \begin{center}
            \atomdiamondex{\cscale}
        \end{center}
    \end{example}
    
    \section{Proof of \Cref{thm.diamonds}}\label{sec.proof}
    
    We put the three models in one picture to show \Cref{thm.diamonds}. The crucial machinery for this proof is a restricted Yang--Baxter equation, which we refer to as the Column \Cref{lem.column}. New tiles are necessary, which expand the previous set and introduce a new orientation.
    
    \Cref{fig.alltiles} depicts the complete set of tiles. We call the first, second and third sets of tiles \itc{right-sheared}, \itc{left-sheared} and \itc{diamond} tiles, respectively. The left-sheared tiles contain a new nontrivial tile of weight $-x_c$. We again depict vertex models with grey diagrams tiled only with tiles of matching orientation. Left- and right-sheared tiles share the same column number $c$ if one is on top of the other; columns are numbered $1$ to $n$ from left to right. Some tiles now have ``double'' red and blue lines; these tiles facilitate the proof, but can be excluded in \Cref{thm.diamonds}.
    
    We still ban adjacencies between diamond tiles as in \Cref{fig.banned}, but there are no restrictions between tiles that are not both diamonds. Left-sheared tiles are unaffected; for example, the following configuration is legal for \textit{any} $i$, $j$ and $k$ in $[n]$:
    \begin{center}
        \allowedglue{\ascale}
    \end{center}
    
    \begin{note}\label{not.tiles}
        Diamond tiles are determined by cutting all trivial right-sheared tiles into triangles and gluing them together in all possible ways. This introduces new labels responsible for the restrictions in \Cref{fig.banned}. We suppress these extra labels to clean up analysis, but show them in \Cref{app.labels}. The left-sheared tiles are not determined this way and do not allow crossings between blue lines.
    \end{note}
    
    The key to our proof is a restricted Yang--Baxter equating columns of tiles. The tiles in \Cref{fig.alltiles} are minimal in the sense that this lemma does not hold if we remove any tile.
    
    \begin{figure}
        \stepcounter{theorem}
        \centering
        \xtiles{\ascale}
        \ytiles{\ascale}
        \ztiles{\ascale}
        \caption{The full set of tiles where $f,g,h,i,j,k$ are in $[n]$, $i\leq j$, $\si(f)<\si(g)$ and $\si(f)<\si(h)$.}\label{fig.alltiles}
    \end{figure}
    \pagebreak
    \begin{lemma}[The Column Lemma]\label{lem.column}
        Let $q_1,\ldots,q_m$, $r$, $s$, $t_1,\ldots,t_m,u,v$ be fixed labels where $r$ and $u$ are in $\{0,+\}$. We have the following equality:
        \begin{center}
            \columnlemma{\ascale}
        \end{center}
    \end{lemma}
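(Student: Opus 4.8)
The plan is to prove \Cref{lem.column} by induction on the number $m$ of chevron units, so that the general statement reduces to moving the block tile past a single unit. The invariant that makes the induction run is that the block carries a fixed ``red'' content: its distinguished edges are exactly the labels $r$ and $u$, and every move leaves that content unchanged, so the hypothesis $r,u\in\{0,+\}$ persists all the way up the column. The base case $m=1$ is a single hexagonal relation, asserting that a left-sheared chevron below the block equals the block above the matching right-sheared chevron; there are two flavours to check against the tile list in \Cref{fig.alltiles}, according as $r$ (equivalently $u$) equals $0$ or $+$. Granting the base case, the inductive step is mechanical: apply the hexagon to the bottommost unit, then apply the inductive hypothesis to the remaining $m-1$ units, keeping track of the column indices $c$ and of the weights $x_c$ and $-x_c$ attached to the nontrivial right- and left-sheared tiles.

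The technical heart is therefore the base hexagon, and here the claim is an identity of \emph{signed} partition functions rather than a bijection of tilings, because the left-sheared tile set contains a nontrivial tile of weight $-x_c$. I would prove it in three stages. First, strip away every blue line that passes through the hexagonal region without turning: such lines occur identically on both sides and contribute matching factors, so they may be removed, leaving a ``skeleton'' consisting of the (dotted) red lines forced by the all-$0$ / all-$+$ boundaries together with the at most two blue lines that actually turn at the relevant heights. Second, I would do the weight-$1$ skeleton case first, by an explicit weight-preserving bijection — the fully diagonal skeleton being an immediate equality — and then classify the remaining skeletons by whether a turning blue line exits the column to the left, exits to the right, or wraps around, and by the presence or absence of a red line at the block, with a short count controlling how many straight blue lines lie on either side of a turning one. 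Third, in each of the remaining cases I would exhibit a sign-reversing involution pairing a tiling of weight $x_c$ on one side with a tiling of weight $-x_c$, so that the extra contributions cancel and both sides collapse to the same expression; this is the content of the ``$\cdots = 0$'' configurations. Finally, after reassembling the stripped blue lines one must verify that no banned adjacency of \Cref{fig.banned} is created: the move only permutes where a red line sits relative to the blue content and never produces a new ``colour $i$ diagonally below colour $j$'' pattern among diamond tiles, and intermediate verticals remain descending strings in the sense of \Cref{lem.descending}, so the restrictions survive.

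A point deserving separate attention is where the hypothesis $r,u\in\{0,+\}$ is actually used. Forcing the block to be red makes it behave as an R-matrix that permutes a red line past the blue content of the column; a blue distinguished edge would instead demand a genuine blue--blue exchange, which is not among our tiles, and this is the reason the restricted Yang--Baxter equation is only restricted (and why, as noted in the introduction, an ``equivariant'' variant would break down). I would make this explicit by isolating the one or two skeleton cases that become inconsistent unless $r$ or $u$ lies in $\{0,+\}$, and observing that the hypothesis excludes exactly those.

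The main obstacle I expect is the skeleton case analysis together with pinning down the cancelling involution: there are many tile shapes to track, the involution must be shown to be well defined and weight-reversing on the nose, and one must check that it is invoked precisely where the naive Yang--Baxter move fails — the ``restricted'' phenomenon that separates this construction from the all-boundary Yang--Baxter equations of \cite{WZ19,Z09}. Keeping the side labels $q_i$ and $t_i$ completely arbitrary throughout, while still controlling which crossings are legal, is where most of the bookkeeping will go.
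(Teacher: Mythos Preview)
Your plan has the right ingredients --- induction on $m$, a unit hexagon as base case, and sign-reversing cancellations for the nontrivial tiles --- but the inductive step as you describe it does not work. You assert that ``every move leaves that content unchanged, so the hypothesis $r,u\in\{0,+\}$ persists all the way up the column,'' and then call the induction mechanical. This is exactly the step that fails. After you apply the unit hexagon to slide the block past the first chevron, the block's new distinguished edges are internal edges of the original column, and those can carry blue labels: nothing in the tile set forces them into $\{0,+\}$. So you cannot immediately re-apply the hexagon to the next unit, and the ``one unit at a time'' induction stalls after a single step. This is precisely why the paper calls \Cref{lem.column} a \emph{restricted} Yang--Baxter equation and contrasts it with the unrestricted unit-hexagon arguments of \cite{WZ19,Z09} in \Cref{rmk.YBE}.

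The paper's actual route is more indirect. It first disposes of the $x_c=0$ case by a direct structural argument (\Cref{lem.trivial}), then proves the unit hexagon (\Cref{lem.emptyhex}). The induction hypothesis is only invoked through \Cref{lem.empties}, which requires $q_i=t_j=0$ for some $i\le j$: a zero on a diamond edge forces the adjacent diagonal edge into $\{0,+\}$, and \emph{that} is what manufactures the $\{0,+\}$ condition at an internal position so the shorter Column Lemma can be applied (three times, with a backwards step). A counting argument (\Cref{lem.minlines}) then shows that if no such zeros exist, the column must contain at least $m-1$ blue lines; the cases of $m-1$ and $m+1$ lines use only trivial tiles and fall under \Cref{lem.trivial}, while the case of exactly $m$ lines is handled by four explicit skeleton pictures with a single nontrivial tile each, paired off by the sign-reversing cancellation you anticipated. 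So your cancellation idea is used, but only in this last residual case --- not as the engine of a uniform induction. To repair your proposal you would need either to prove your persistence claim (which is false as stated) or to restructure the induction around the zero-forcing mechanism.
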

    \begin{proof}
        We reserve this proof for \Cref{sec.colproof}.
    \end{proof}
    
    \begin{example}
        We consider two examples of \Cref{lem.column}. In the first example, both sides have weight $x_c^2$. In the second, there are two ways to tile the column on the left-hand side which cancel, and there is no way to tile the column on the right-hand side.
        \begin{center}
            \columnexamplea{\cscale}
            \columnexampleb{\cscale}
        \end{center}
    \end{example}
    
    \begin{remark}\label{rmk.YBE}
        In \cite{WZ19,Z09}, the authors proceed similarly with a Yang--Baxter equation that equates unit hexagons with unrestricted boundaries. In contrast, our equation restricts the labels $r$ and $u$, which might suggest a more general framework to explore.
    \end{remark}
    
    Interpreting the next lemma proves our result.
    \pagebreak
    \begin{lemma}\label{lem.prisms}
        Set $k=\max(\al)+\max(\la)$. The following configurations have the same partition function:
        \begin{center}
            \prisms{\bscale}
        \end{center}
    \end{lemma}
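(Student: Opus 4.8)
The plan is to prove \Cref{lem.prisms} by transforming the left configuration into the right configuration one ``slab'' at a time, using the Column \Cref{lem.column} repeatedly. Both configurations are prism-shaped grey diagrams: the left one has a block of right-sheared tiles (the atom model with left boundary $\al^*$, bottom $\si^{-1}$) sitting below/left of a block of diamond tiles (encoding the structure coefficient via $\la^+$ on the lower-left and $\rcirc^k\wcirc^n$ on the upper-right), glued along a shared slanted face to a block that reads $\bacirc^n$ on top (the Schur model). The right configuration has the same three pieces reassembled so that the diamond block is on the bottom and the right-sheared/Schur pieces are on top. What \Cref{lem.column} provides is exactly the local move that commutes one column of one piece past one column of the adjacent piece: it equates two vertical strips of tiles with the same fixed labels $q_1,\ldots,q_m,r,s,t_1,\ldots,t_m,u,v$ on the boundary, where the strip on one side has its ``block'' tile at the bottom and on the other side has it at the top.

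First I would set up the bookkeeping: identify in the left prism a maximal vertical column of tiles spanning the boundary between the right-sheared block and the diamond block, check that its top and bottom edges carry labels in $\{0,+\}$ (this is where the hypothesis of \Cref{lem.column} that $r,u\in\{0,+\}$ must be verified — the top edge of the atom/Schur region is labelled with the string $\bacirc^n$ and $\la^+$ consists of $0$'s and $+$'s, so the relevant horizontal cuts indeed only ever see labels $0$ or $+$), and then apply \Cref{lem.column} to slide that column. Each application moves one ``block'' tile from the lower-left face to the upper-right face of the shared interface, i.e. it peels one column off the bottom piece and appends it to the top piece. Iterating this column by column across the entire width of the interface transforms the left prism into the right prism, since after all columns have been processed the diamond block has migrated underneath and the right-sheared/Schur blocks sit on top, which is precisely the right-hand configuration. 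The intermediate configurations in the \prismflip illustration (the chain of $=$ signs) are exactly these partial states, so the figure is the combinatorial skeleton of the argument.

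The main obstacle will be checking that the hypotheses of \Cref{lem.column} are genuinely satisfied at \emph{every} intermediate stage — in particular that the horizontal edges being cut always carry labels in $\{0,+\}$ rather than a blue colour, and that the banned adjacencies of \Cref{fig.banned} are respected throughout (the lemma's statement is about generic fixed boundary labels, but the global configuration must never force a forbidden diamond-diamond adjacency across the moving seam). I expect this to require a short argument, using \Cref{lem.descending} and the structure of the strings $\al^*$, $\la^+$, $\bacirc^n$ and $\rcirc^k\wcirc^n$, that the seam between the diamond block and the sheared blocks only ever meets horizontal lines of the ``red/empty'' type (labels $0,+$), never horizontal blue lines; the blue lines in the atom/Schur region run diagonally or vertically at the seam, so the horizontal cuts required by \Cref{lem.column} see only red or empty edges. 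Once that invariant is established, the repeated application of \Cref{lem.column} is purely mechanical and the two partition functions agree term by term after the induced bijection on tilings.
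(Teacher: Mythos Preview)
Your overall approach is correct and matches the paper's: repeatedly apply the Column \Cref{lem.column} to slide columns across the prism, transforming the left configuration into the right one. The paper's proof is essentially the one-line version of your plan, specifying the order as ``right to left, top to bottom'' and asserting that the boundary conditions guarantee the $r,u\in\{0,+\}$ hypothesis at every stage.

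Where your proposal overcomplicates things is in the verification of that hypothesis. You suggest invoking \Cref{lem.descending} and arguing about blue lines at the seam, but none of this is needed. The labels $r$ and $u$ in each application of \Cref{lem.column} are the southwest and northeast endpoints of the column being flipped, and these endpoints always lie on the \emph{external} boundaries of the prism labelled $\la^+$ and $\rcirc^k\wcirc^n$ respectively. Both of these strings consist, by construction, entirely of symbols in $\{0,+\}$, and they are fixed throughout the process (they are boundary data, not internal edges). So the hypothesis is immediate; there is no invariant to maintain. Your mention of $\bacirc^n$ is a red herring: that string is all $1$'s and labels the top edge, which plays no role in the $r,u$ condition.

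Your concern about the banned adjacencies of \Cref{fig.banned} across the moving seam is also unnecessary. Those restrictions apply only between pairs of diamond tiles, and the Column \Cref{lem.column} already incorporates them internally; the seam between a flipped column and an unflipped one involves a sheared tile on at least one side, where no restriction applies.
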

    
    \begin{proof}
        Repeatedly applying the Column \Cref{lem.column} to internal columns of length $2(k+n)$, from right to left, top to bottom, shows equality. We illustrate this schematically, omitting the boundaries:
        \begin{center}
            \prismflip{\dscale/2}
        \end{center}
        The boundary conditions ensure that the southwest and northeast labels of columns we equate are always in $\{0,+\}$ at every stage in this process, allowing repetition of \Cref{lem.column}.
    \end{proof}
    
    The proof now follows from examining both sides of the equation in \Cref{lem.prisms}. In short, the left-hand side is manifestly the product $\A_\al^\si(x)s_\la(x)$ and the right-hand side is manifestly the summation $\sum_\be a_{\al\la}^\be(\si)\A_\al^\si(x)$ where $a_{\al\la}^\be(\si)$ is the partition function of the atom-Schur model.

    \begin{figure}
        \stepcounter{theorem}
        \centering
        \prismexample{\cscale}
        \caption{Two tilings of weight $x_1^3x_2^2x_3^4$ from the configurations in \Cref{lem.prisms} where $\la=(2,2,1)$, $\si=231$ and $\al=(2,2,1)$, so that $n=3$, $k=4$ and $\si^{-1}=312$. Regions are labelled to facilitate exposition.}\label{fig.prisms}
    \end{figure}
    \pagebreak
    \begin{proof}[Proof of \Cref{thm.diamonds}]
        Before proceeding, we note a few differences from our main theorem:
        \begin{enumerate}
            \item Here, we include extra diamond tiles to make the Column \Cref{lem.column} hold; these tiles have ``double'' blue and red lines. These tiles will not occur when filling the atom-Schur model, so we omit them in the statement of \Cref{thm.diamonds}.
            \item We have set $k=\max(\la)+\max(\al)$, but when considering only a particular atom-Schur model where $\be$ is given, it suffices to set $k=\max(\be)$, as is done in \Cref{thm.diamonds}.
        \end{enumerate}
        See the illustration in \Cref{fig.prisms} where we label regions ${\color{blue6}\mathbf{A}},{\color{blue6}\mathbf{B}},\ldots,{\color{blue6}\mathbf{K}}$. Within region \BB{A}, all blue lines must move northwest and all red lines must move east. The red lines continue straight northeast through \BB{B}, transmitting the string $\la^+$ to the southwest boundary of \BB{C}. Lemma 10 in \cite{Z09} says that there is only one way to tile \BB{C}, which forces the shared boundary between \BB{C} and \BB{E} to be the string $\la^-$ upside-down. Thus, regions \BB{A}, \BB{B} and \BB{C} have weight $1$.
        
        Next, we recognize region \BB{D} as the atom model $\A_\al^\si(x)$. From the previous paragraph, we have that $\la^-$ is upside-down on the southeast boundary of \BB{E}. Rotating region \BB{E} by $180$ degrees, we see that it is the Schur model for $s_\la(x)$ with the variables in reverse order; since $s_\la(x)$ is symmetric, the weight of the left-hand side is the product $\A_\al^\si(x)s_\la(x)$. We summarize pictorially:
        \begin{center}
            \LHSeq{\cscale}
        \end{center}
        
        On the right-hand side, the regions \BB{G}, \BB{H} and \BB{I} always have weight $1$ and cross as in \Cref{fig.prisms}. Within region \BB{J}, all blue lines must exit the northwest boundary if they are to reach the northwest boundary of \BB{K} (our choice of $k$ makes the boundary long enough to ensure this). The blue lines then travel through the boundary between \BB{J} and \BB{K}, varying over strings $\be^*$ that encode weak compositions $\be$ (these strings will be descending by a similar argument made in \Cref{lem.descending}). For each $\be^*$ along this boundary, we recognize region \BB{J} as $\A_\be^\si(x)$ and region \BB{K} as the atom-Schur model from \Cref{thm.diamonds}. Thus, the right-hand side is a summation over compositions $\be$ where each summand is a product of the atom-Schur model $a_{\al\la}^\be(\si)$ and $\A_\be^\si(x)$. Again, summarizing pictorially:
        \begin{center}
            \RHSeq{\cscale}
        \end{center}
        By \Cref{lem.prisms}, we can equate both sides, completing the proof.
    \end{proof}
    
    \section{Proof of the Column \Cref{lem.column}}\label{sec.colproof}
    We prove the key technical result of this paper, the Column \Cref{lem.column}. For a label $q\in\{b,b^+\}$ with $b\in[n]$, we refer to its colour by $\#(q)=b$ and set $\#(0)=\#(+)=0$. We may mark an edge labelled $q$ with a symbol to indicate restrictions on $q$. The symbol $\wcirc$ indicates $q=0$, $\wrcirc$ indicates $\#(q)=0$ and $\brcirc$ indicates $\#(q)\in[n]$.
    
    We sometimes set the colours of certain labels equal. Doing this has the effect of uncrossing lines, altering the following tiles (dotted red lines indicate a red line may be present or not):
    \begin{center}
        \skeletons{\ascale}
    \end{center}
    Restating \Cref{lem.column} with these conventions, we wish to prove the equation
    \begin{center}
        \columnlemmarestated{\ascale}
    \end{center}
    We first show that \Cref{lem.column} is true when both columns use only trivial tiles, that is, when $x_c=0$.
    
    \begin{lemma}\label{lem.trivial}
        \Cref{lem.column} is true when $x_c=0$.
    \end{lemma}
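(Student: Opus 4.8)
The starting point is that setting $x_c=0$ annihilates every nontrivial tile: among the left- and right-sheared tiles of \Cref{fig.alltiles}, the only ones of nonzero weight are then the trivial ones, all of weight $1$, and all diamond tiles already carry weight $1$. Consequently both sides of the equation in \Cref{lem.column} are simply the number of legal tilings of the two configurations, so it suffices to exhibit a bijection between legal trivial tilings of the left-hand configuration and those of the right-hand one; since every surviving tile has weight $1$, weight-preservation is automatic and the whole claim is a matter of matching tilings.

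The next step is to record the rigidity this forces. Without the nontrivial tiles, a blue line of colour $i$ inside a column of trivial right-sheared tiles can only turn a corner within a single tile or run vertically while crossing a horizontal blue line of a colour $g$ with $\si(g)>\si(i)$; it can never run vertically on its own. In a column of trivial left-sheared tiles a blue line may run vertically, but two blue lines never cross there and two red lines never cross. Applying the argument of \Cref{lem.descending} verbatim to the sheared columns shows that each vertical separating two adjacent tiles still reads a descending colour string. Together with the hypothesis $r,u\in\{0,+\}$ --- so that no blue line enters or exits at the extreme corners --- these constraints tightly restrict, in terms of the fixed boundary data $q_1,\dots,q_m,r,s,t_1,\dots,t_m,u,v$, how every blue and red line may route through the sheared tiles flanking the diamond column, so that the genuine freedom in a trivial tiling lives essentially in the diamond strip.

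With the routing of the sheared parts under control, the proof becomes the familiar ``slide a braiding past a crossing'' manoeuvre: given a legal trivial tiling of one configuration, one pushes the diamond column through the adjacent sheared tiles one row at a time, each elementary step being a local identity between a pair of trivial tiles on one side and a pair on the other, checked against the short finite list of trivial pictures. I would organize this as an induction on the number $m$ of sheared rows: peel off the outermost sheared tile together with the diamond tile it abuts, apply the corresponding corner identity, and invoke the inductive hypothesis on the shorter instance; the base cases $m=0,1$ are a handful of pictures checked directly, and the restriction $r,u\in\{0,+\}$ is exactly what keeps the ``double'' red-and-blue trivial tiles from intruding there, which explains why \Cref{lem.column} must restrict $r$ and $u$. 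I expect the real work --- and the main obstacle --- to be the bookkeeping in the inductive step: one must simultaneously track blue lines crossing horizontally inside a right-sheared column, blue lines threading the diamonds, and the red lines, all while never violating the banned adjacencies of \Cref{fig.banned}, so that organizing the finitely many configuration types into a clean case analysis, rather than any single local check, is where care is needed.
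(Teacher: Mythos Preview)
Your strategy diverges from the paper's, and there is a real obstacle in the inductive step that you have not addressed. You propose to push the single sheared ``block'' tile through the diamond strip one hexagon at a time, invoking a local corner identity at each step and then appealing to the length-$(m-1)$ case. The difficulty is that the local hexagon identity you need is precisely the $m=1$ instance of \Cref{lem.column}, and that instance already requires the corner labels $r$ and $u$ to lie in $\{0,+\}$; it is \emph{not} a free Yang--Baxter equation valid for arbitrary boundaries (cf.\ \Cref{rmk.YBE}). After you peel off one row, the new corner label sitting between the displaced block and the remaining diamonds is an \emph{internal} edge of the original column and can perfectly well be a coloured label $i$ or $i^+$. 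At that point neither the hexagon identity nor the inductive hypothesis applies, and your induction stalls. You gesture at this when you say the restriction on $r,u$ ``keeps the double tiles from intruding,'' but that only handles the very first step; you give no mechanism ensuring the restriction propagates.

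The paper instead argues directly and globally, with no induction on $m$. The point is that once nontrivial tiles are killed, a tiling of either column is essentially forced: away from the bottom and top the diamond strip is decorated identically on both sides, and the only discrepancy is in how the line entering at $s$ (if $0<s<\#(t_1)$) threads upward past the first few $q$'s, and symmetrically how the line at $v$ threads downward past the last few $q$'s. The paper isolates these two small regions, shows the left and right configurations route $s$ and $v$ in the same way (one via vertical crossings in the sheared part, the other via turning in the diamond part), and observes that the large middle region is literally identical. This sidesteps the boundary-restriction problem entirely because no local move is ever invoked.
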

    \begin{proof}
        As stated in \Cref{not.tiles}, diamond tiles are produced by taking trivial right-sheared tiles, cutting them into triangles, and gluing them together in all possible ways. Left-sheared tiles behave differently, not allowing crossings. The result is that columns with only trivial tiles are decorated identically unless $0<s<\#(t_1)$ or $v>\#(q_m)>0$. We outline the general case where both of these are true:
        \begin{center}
            \unweightedproof{\bscale}
        \end{center}
        In the left column, line $s$ crosses vertically under some contiguous sequence of lines that exit at $q_1,\ldots,q_i$. The index $i$ is the smallest integer where $\#(t_{i+1})\leq s<\#(q_i)$ or $i=m$. Within the right column, line $s$ must turn to the left after exiting the tile labelled by $t_i$. A similar situation is forced in the top region for the line $v$. The grey regions \BB{C} are identical in both columns. Here we set the colours of $q_1,\ldots,q_i$ and $q_j,\ldots q_m$ equal, but these colours may be different, in which case crossings will occur identically in regions \BB{A} or \BB{B}.
    \end{proof}
    
    We proceed by induction on $m$. In light of the following lemma, the base case $m=1$ holds.
    \begin{lemma}\label{lem.emptyhex}
        The unit hexagons with the following restrictions are equivalent:
        \begin{align*}
            \zerohexa{\ascale} \qquad\qquad\qquad \zerohexb{\ascale}
        \end{align*}
    \end{lemma}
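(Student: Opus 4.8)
The plan is to prove each of the two hexagon identities by exhibiting a weight-preserving bijection between the tilings of its two sides. A unit hexagon is tiled by only a bounded number of tiles --- a block together with a chevron, i.e.\ three sheared tiles per side once the chevron is cut along its internal line --- so for fixed boundary labels $q,r,s,t,u,v$ each side has at most a handful of valid tilings, indexed by the two internal edge labels, and all of them can be listed. I would organize the verification as a case analysis on the colours and types (blue, red$/+$, or $0$) of the six boundary labels, using the stated restrictions: $r,u\in\{0,+\}$ in the first hexagon, and $q=t=0$ in the second. Note the first hexagon is exactly the $m=1$ instance of the restated Column \Cref{lem.column}, so establishing it yields the claimed base case, while the second is the same kind of identity under a different boundary restriction.

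I would first dispose of the tilings that use only trivial tiles. For the first hexagon these contribute precisely the partition functions of the $m=1$ Column Lemma with $x_c=0$, which agree on the two sides by \Cref{lem.trivial}; for the second hexagon the analogous statement follows by the same direct inspection of forced configurations. Concretely, under either set of restrictions any blue line through the region is forced to run straight: in the first hexagon it can only enter through $q$ and leave through $t$ or $v$, since $r$ and $u$ carry no blue; in the second it can only pass from $s$ to $v$, since $q$ and $t$ are empty. The red lines pass through unobstructed, so the unique trivial tiling of one side reflects to the unique trivial tiling of the other, both of weight $1$. It therefore remains only to match the tilings that contain the single nontrivial tile --- of weight $x_c$ in the right-sheared orientation and $-x_c$ in the left-sheared orientation.

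In the remaining cases a blue line must turn inside the hexagon at the column, and I would pair such tilings according to whether the turn produces a genuine $x_c$ contribution on both sides --- giving a direct bijection that exchanges the two shearings --- or whether one side admits a pair of tilings whose $x_c$ and $-x_c$ weights cancel while the other side admits none. The cancelling subcases are exactly those in which a $+$ on an edge such as $r$ toggles the presence of a red line past the nontrivial tile, the same behaviour exhibited in the second picture of the example following \Cref{lem.column}; here the sign carried by the left-sheared nontrivial tile in \Cref{fig.alltiles} is what forces the two left-sheared tilings to cancel, and the tile-validity constraints on the three tiles of the hexagon rule out any spurious extra tilings. I expect this sign bookkeeping in the mixed blue/red subcases to be the only genuine obstacle; once it is settled, the bijections are complete and both identities, and hence the base case $m=1$ of the Column Lemma, follow.
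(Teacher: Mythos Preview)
Your proposal is correct and takes essentially the same approach as the paper: the paper's proof is literally ``Both equations follow from brute force,'' and your organized case analysis (split into trivial versus nontrivial tiles, with cancellation for the $-x_c$ pairs) is just a structured version of that enumeration. One small slip to fix when you write it up: in the first hexagon blue lines enter at $s$ or $t$ and exit at $q$ or $v$, not the reverse, and the paper explicitly notes that in the second hexagon $r$ and $u$ are \emph{unrestricted} (only $q=t=0$ is imposed), so \Cref{lem.trivial} does not apply there and you must do that trivial-tile check directly as you indicated.
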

    \begin{proof}
        Both equations follow from brute force. Note that $r$ and $u$ are unrestricted in the second equation.
    \end{proof}
    
    Many cases are now possible to prove by induction.
    
    \begin{lemma}\label{lem.empties}
        Assume that \Cref{lem.column} holds for columns of length $m-1\geq1$. Then if $q_i=t_j=0$ for $1\leq i\leq j\leq m$, we have:
        \begin{center}
            \fliplemmaa{\bscale}
        \end{center}
    \end{lemma}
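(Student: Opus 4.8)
The plan is to induct on $m$. When $m=1$ the hypothesis $1\le i\le j\le 1$ forces $i=j=1$, so $q_1=t_1=0$, and the asserted identity is precisely the second unit‑hexagon equivalence of \Cref{lem.emptyhex} — the one carrying white circles on the $q$‑ and $t$‑edges, for which $r$ and $u$ are unrestricted. So the base case is immediate. For the inductive step I assume the Column \Cref{lem.column} for columns of length $m-1$ and transport the block from the bottom of the left column to the top one chevron at a time, producing a chain of equalities of the kind that appears in the later figures of this section (with the block passing through intermediate positions).

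Two elementary moves are available along this chain. First, whenever the block sits immediately below a chevron whose west edge $q_k$ equals $0$, or immediately above a chevron whose east edge $t_k$ equals $0$, the block may be pushed past that single chevron: this is again the restricted unit hexagon of \Cref{lem.emptyhex}, where the forced $0$ on $q_k$ (respectively $t_k$) is exactly the restriction that makes the hexagon identity valid with all other boundary labels arbitrary. By hypothesis such chevrons occur at levels $i$ and $j$, so at least one such local move is available and the chain is nonvacuous. Second, over a maximal run of consecutive chevrons none of whose relevant edge is one of the forced zeros, I push the block past the whole run at once by invoking the Column \Cref{lem.column} for a column of length strictly less than $m$, i.e. the induction hypothesis. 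Splicing these moves together slides the block all the way to the top, and the resulting configuration is exactly the right‑hand side of \Cref{lem.column}, which is what we want.

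The hard part is checking that each use of the induction hypothesis is legitimate, namely that the sub‑column it is applied to has both of its distinguished corner labels in $\{0,+\}$. The bottom‑left corner inherits $r\in\{0,+\}$ from the hypothesis on the full column at the first stage. The top‑right corner of any sub‑column occurring in the chain is an internal vertical edge of the original diagram; I would show it lies in $\{0,+\}$ by combining \Cref{lem.descending} (verticals encode descending strings, so the set of colours that can appear is tightly controlled) with the placement of the forced zeros $q_i=0$ and $t_j=0$: the condition $i\le j$ is exactly what positions these zeros so that the vertical edge abutting any such stretch of chevrons carries no blue line, hence is in $\{0,+\}$. This bookkeeping — and in particular the case $i<j$, where no single chevron between levels $i$ and $j$ need be transparent (unlike the clean case $i=j$, in which chevron $i$ carries no line at all and may simply be deleted to reduce $m$ directly) — is the step I expect to demand the most care.
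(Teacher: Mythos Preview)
Your plan has two genuine gaps.

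First, your ``local move'' does not follow from \Cref{lem.emptyhex}. The second identity there requires \emph{both} $q=0$ and $t=0$ on the chevron; having only $q_k=0$ (or only $t_k=0$) is not enough to push the block past a single chevron with ``all other boundary labels arbitrary.'' The first identity in \Cref{lem.emptyhex} instead requires the two corner labels $r,u$ of that unit hexagon to lie in $\{0,+\}$, and you have not established that for the relevant internal edges. So as stated, neither version of \Cref{lem.emptyhex} licenses your single-chevron step.

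Second, the bookkeeping you sketch for the internal corner labels rests on \Cref{lem.descending}, but that lemma is about verticals in a tiling of the atom model with its particular boundary conditions; it says nothing about an arbitrary column with the labels $q_1,\dots,q_m,r,s,t_1,\dots,t_m,u,v$. The correct (and much simpler) observation is local to the diamond tiles themselves: inspecting the list of diamond tiles shows that if the northwest edge of a diamond is $0$ then its northeast edge is forced into $\{0,+\}$, and symmetrically if the southeast edge is $0$ then the southwest edge is in $\{0,+\}$. Applying this at the diamonds adjacent to $q_i=0$ and to $t_j=0$ pins down exactly the internal edges you need.

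With that observation in hand, the paper's argument is short and does not induct on $m$. For $i<j$, one applies the length-$(m{-}1)$ Column Lemma three times (the middle application runs backwards), each time using one of the newly forced $\{0,+\}$ edges as the required corner. For $i=j$, one applies the Column Lemma once, then the second identity of \Cref{lem.emptyhex} (now legitimate since $q_i=t_i=0$), then the Column Lemma again. Your chevron-by-chevron transport can be repaired along these lines, but only after replacing the appeal to \Cref{lem.descending} with the tile inspection above and dropping the single-zero hexagon step.
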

    \begin{proof}
        Note that if the northwest or southeast edge of a diamond tile is $0$, then the northeast or southwest edge, respectively, must be in $\{0,+\}$ since these are the only such tiles:
        \begin{center}
            \forceddiamonds{\ascale}
        \end{center}
        From this fact, we know some internal edges are in $\{0,+\}$, which we label with the $\wrcirc$ symbol. If $i<j$, we have the following equivalences by applying \Cref{lem.column} three times, going backwards part of the way in the second equality:
        \begin{center}
            \fliplemmab{\bscale}
        \end{center}
        If $i=j$, we have the following equivalences, where the second equality follows from applying the second equation in \Cref{lem.emptyhex}:
        \begin{center}
            \fliplemmac{\bscale}
        \end{center}
    \end{proof}
    
    Since $r$ and $u$ are in $\{0,+\}$, there can be at most $m+1$ separate blue lines within a column, entering at $s,t_1,\ldots,t_m$ and exiting at $v,q_1,\ldots,q_m$. The next lemma shows that we can assume at least $m-1$ separate blue lines.
    \begin{lemma}\label{lem.minlines}
        \Cref{lem.column} holds for columns of length $m$ containing less than $m-1$ separate blue lines.
    \end{lemma}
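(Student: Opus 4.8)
The plan is to continue the induction on $m$ that organizes this section: we assume the Column \Cref{lem.column} for all columns of length less than $m$ (the base case $m=1$ being \Cref{lem.emptyhex}), and we may assume $x_c\neq 0$ by \Cref{lem.trivial}. The number of separate blue lines is a function of the boundary alone, since each blue line enters at one of $s,t_1,\ldots,t_m$ and exits at one of $v,q_1,\ldots,q_m$; thus this count equals the number of blue-carrying edges among $s,t_1,\ldots,t_m$, equivalently among $v,q_1,\ldots,q_m$. So the hypothesis is that this common count is at most $m-2$, and a pigeonhole count on the $m+1$ edges on each side then yields at least two indices in $\{1,\ldots,m\}$ at which $t_i$ is uncoloured (in $\{0,+\}$) and at least two at which $q_i$ is uncoloured. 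These ``slack'' rows are what let us fall back on \Cref{lem.empties}.

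First I would dispose of the case in which a slack row on the left and one on the right are genuinely empty in a compatible order, i.e.\ there are indices $1\le i\le j\le m$ with $q_i=t_j=0$: this is exactly the hypothesis of \Cref{lem.empties}, so that lemma, together with the inductive Column Lemma for length $m-1$, finishes this case. The remaining work is confined to boundaries where every uncoloured $q_i$ or $t_j$ that could serve as such a witness is instead occupied by a red line, i.e.\ labelled $+$. Here I would argue that the red strands through those slots are pinned down: using the fact recorded in the proof of \Cref{lem.empties} that the only diamond tiles carrying a $0$ on a north-west (resp.\ south-east) edge have a label in $\{0,+\}$ on the opposite edge, together with the banned adjacencies of \Cref{fig.banned} and the tile list of \Cref{fig.alltiles}, one shows that in every tiling such a red strand runs straight through a horizontal band of the column which no blue line can cross. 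Consequently there is a level $\ell\in\{1,\ldots,m-1\}$ at which a horizontal cut of the column meets only straight red strands, with the same interior string appearing on both sides of the claimed identity; cutting there produces two columns of lengths $\ell$ and $m-\ell$, both smaller than $m$, to which the inductive Column Lemma applies, and regluing gives the equality.

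The main obstacle I anticipate is this second case: making precise the claim that whenever the direct reduction to \Cref{lem.empties} is blocked by red lines, a transparent cut necessarily exists. Concretely, one must trace the routing of a red strand entering or leaving at an uncoloured slot and verify, against the tiles of \Cref{fig.alltiles} and the banned adjacencies, that it cannot wander in a way that simultaneously obstructs every compatible empty pair $q_i=t_j=0$ and leaves no cuttable level; I expect this to reduce to a short finite case analysis on the relative positions of the (at least two) slack rows on each side and the direction of the red strand through them, with the smallest cases $m=2,3$ occasionally landing in the $i=j$ branch of \Cref{lem.empties} via \Cref{lem.emptyhex}. Everything else — the pigeonhole count, the first case, and the regluing step — should be routine given the inductive hypothesis.
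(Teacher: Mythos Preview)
Your first case is the whole proof, but you do not close it, and your second case rests on a misreading of the tile set.

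Check the edge labels in \Cref{fig.alltiles}: on the north-west side of a right-sheared tile and on the south-east side of a diamond tile the only labels that occur are $0$, $b$, or $b^+$ for $b\in[n]$. The symbol $+$ alone never appears on these edges. Hence every $q_i$ and every $t_j$ is either $0$ or blue-carrying; your ``uncoloured but labelled $+$'' scenario cannot occur, and the proposed cutting argument is aimed at an empty case. On the other hand, your pigeonhole count, while correct, does not by itself deliver a pair $i\le j$ with $q_i=t_j=0$: nothing you have said rules out, say, $q_{m-1}=q_m=0$ and $t_1=t_2=0$ with all other $q$'s and $t$'s blue. So after disposing of the fictitious $+$-case you are left with exactly the situation you have not handled.

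What the paper does is precisely fill this gap with a one-line geometric count. Take the smallest $i$ with $q_i=0$ (if none exists there are already $m$ blue exits). If some $t_j=0$ with $j\ge i$, \Cref{lem.empties} applies and you are done. Otherwise $t_i,\ldots,t_m$ are all blue-carrying, and so are $q_1,\ldots,q_{i-1}$; since a blue line entering at $t_j$ can exit no lower than $q_{j-1}$, the lines from $t_{i+1},\ldots,t_m$ are disjoint from those exiting at $q_1,\ldots,q_{i-1}$, giving at least $(i-1)+(m-i)=m-1$ separate blue lines and contradicting the hypothesis. That is the missing step: replace your red-line case analysis with this count, and the lemma follows immediately from \Cref{lem.empties}.
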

    \begin{proof}
        If $q_i\neq0$ for all $1\leq i\leq m$, there are at least $m$ separate blue lines within a column. Otherwise, let $i$ be the smallest integer with $q_i=0$, so there is a blue line at $q_k$ for $1\leq k<i$. By \Cref{lem.empties}, we can assume that $t_j\neq0$ for $i\leq j\leq m$. This accounts for $(i-1)+(m-i+1)=m$ edges incident with a blue line:
        \begin{center}
            \numberofblues{\bscale}
        \end{center}
        
        The line entering at $t_i$ may exit at $q_{i-1}$, but none of the lines entering at $t_{i+1},\ldots,t_m$ may exit at $q_1,\ldots,q_{i-1}$, accounting for at least $m-1$ separate blue lines.
    \end{proof}
    We now prove \Cref{lem.column} by handling the remaining cases.
    \begin{proof}[Proof of the Column \Cref{lem.column}]
        From \Cref{lem.minlines}, we only need to consider the cases of a column with at least $m-1$ separate blue lines. We again set labels equal, uncrossing blue lines, to show the general structure of tilings.
        \begin{enumerate}
            \item If there are $m-1$ separate blue lines, we have $q_i=0$ for some $1\leq i\leq m$. As argued in \Cref{lem.minlines}, we can assume $t_i,\ldots,t_m$ and $q_1,\ldots,q_{i-1}$ are nonzero and a blue line $b$ entering at $t_i$ must exit at $q_{i-1}$. If $b$ crosses a blue line, we have at least $m$ separate blue lines, so $b$ crosses two red lines. These cases only use trivial tiles, covered by \Cref{lem.trivial}. We give a general outline where $1\leq h<i\leq m$:
            \begin{center}
                \horizontalcases{\bscale}
            \end{center}
            \item If there are $m+1$ separate blue lines, these cases only use trivial tiles and so are covered by \Cref{lem.trivial}:
            \begin{center}
                \fullskeleton{\bscale}
            \end{center}
            \item If there are $m$ separate blue lines, there are four outlines using a nontrivial tile.
            \begin{center}
                \partskeletona{\cscale}
                \partskeletonb{\cscale}
                \partskeletonc{\cscale}
                \partskeletond{\cscale}
            \end{center}
            In each case, allowing different colours preserves equality. Case 3a is clear, though the other cases require more careful analysis and examples can get quite large. We outline case 3b in the most general situation, tracking the blue line in the left tiling that crosses through the nontrivial tile labelled by $q_i=0$. Say this line enters at $t_h$ and exits at $q_k$ and let $1\leq h\leq i<j<k\leq m$:
            \begin{center}
                \finalcaseb{\bscale}
            \end{center}
            In this most general case, we assume that there exists some smallest value of $j$ where $\#(q_j)<t_h\leq\#(t_{j+1})$ and $j>i$. We also require that $\#(q_{h-1})\leq t_h$ and $\#(q_k)\leq\#(q_{k+1})$ to tile either column. As before, crossings in regions \BB{A} or \BB{B} will occur identically.
        \end{enumerate}
    \end{proof}
    \section*{Acknowledgements}
    I am deeply grateful to Kevin Purbhoo for introducing me to this approach, for many discussions, and for his boundless encouragement over the course of this project. I thank Oliver Pechenik for valuable feedback on the final draft. Allen Knutson and Paul Zinn-Justin hosted a great workshop on vertex models (LAWRGe 2022) that answered some burning questions. Allen also had good feedback probing this framework. I thank Per Alexandersson, Valentin Buciumas, Olya Mandelshtam and Travis Scrimshaw for their comments. This work was supported by Kevin Purbhoo's NSERC Discovery Grants.
    
    \appendix
    \section*{Appendix}
    \section{Triangle tiles and extra labels}\label{app.labels}
    We may add extra labels to our tiles in \Cref{fig.alltiles} to enforce the restrictions in \Cref{fig.banned} between diamond tiles, only requiring that labels match along edges. We construct rhombic tiles out of triangles with extra labels, emphasized in bold purple font.
    \begin{center}
        \cuttriangles{\ascale}
    \end{center}
    Here, $i,f,g\in[n]$, $\si(f)<\si(g)$, $0\leq a\leq n$ and $0\leq b\leq i$. Rhombic tiles only use a subset of triangular tiles, depending on the orientation of the rhombus. Pairs of triangles within each subset are glued together in all possible ways, where labels match, producing rhombi of the corresponding orientation.
    \begin{itemize}
        \item Triangles \BB{A} and \BB{B} construct diamond tiles.
        \item Triangles \BB{A}, \BB{B} and \BB{C} construct right-sheared tiles.
        \item Triangles \BB{A} and \BB{D} construct left-sheared tiles.
    \end{itemize}
    The resulting tiles are almost identical to \Cref{fig.alltiles}, only with two extra labels on the diamond and left-sheared tiles; right-sheared tiles are the same as before. However, this new labelling scheme implies we should adjust the Column \Cref{lem.column} to sum over these new labels on the right-hand side, giving
    \begin{center}
        \extracolumnlemma{\ascale}
    \end{center}
    where $r$ and $u$ are in $\{0,+\}$. The statement of \Cref{thm.diamonds} will also change, summing over all values of the extra labels along the southwest boundary and setting extra labels along the northeast boundary to $0$.
    \bibliography{bibfile}
\end{document}